\numberwithin{equation}{section}
\newtheorem{theorem}{Theorem}[section]
\newtheorem{lemma}[theorem]{Lemma}
\newtheorem{corollary}[theorem]{Corollary}
\newtheorem{proposition}[theorem]{Proposition}
\theoremstyle{definition}
\theoremstyle{remark}
\newtheorem{remark}[theorem]{Remark}
\newcommand{\R}{\mathbb{R}}
\newcommand{\eps}{\varepsilon}
\newcommand{\mct}{\mathcal{T}_h}
\newcommand{\jump}[1]{\left[\hspace{-0.025in}\left[#1\right]\hspace{-0.025in}\right]}
\newcommand{\curl}{{\ensuremath\mathop{\mathrm{curl}\,}}}
\newcommand{\dive}{{\ensuremath\mathop{\mathrm{div}\,}}}
\newcommand{\pol}{\EuScript{P}}
\newcommand{\bpol}{\boldsymbol{\pol}}
\newcommand{\bld}[1]{\boldsymbol{#1}}
\newcommand{\ff}{{\bld{f}}}
\newcommand{\bv}{\bld{v}}
\newcommand{\bw}{\bld{w}}
\newcommand{\bn}{\bld{n}}
\newcommand{\bu}{\bld{u}}
\newcommand{\bff}{\bld{f}}
\newcommand{\bV}{\bld{V}}
\newcommand{\bPi}{\bld{\Pi}}
\newcommand{\bH}{\bld{H}}
\newcommand{\bx}{\bld{x}}
\newcommand{\bL}{\bld{L}}
\newcommand{\Om}{\Omega}
\newcommand{\bbeta}{{\bm \beta}}
\newcommand{\be}{\bm e}
\title[Numerical analysis of a linearised model of inviscid
incompressible flow]{Well-posedness and H(div)-conforming finite element approximation of a linearised model for inviscid
incompressible flow}
\author[Gabriel Barrenechea]{Gabriel Barrenechea}
\address{Department of Mathematics and
    Statistics, University of Strathclyde, 26 Richmond Street,
    Glasgow, G1 1XH United Kingdom}
\email{gabriel.barrenechea@strath.ac.uk}
\author[Erik Burman]{Erik Burman}
\address{Department of Mathematics, University College London, London, UK-WC1E  6BT, United Kingdom}
\email{e.burman@ucl.ac.uk}
\author[Johnny Guzman]{Johnny Guzman}
\address{Division of Applied Mathematics
Brown University
Box F
182 George Street
Providence, RI 02912}
\email{johnny\_guzman@brown.edu}
\begin{document}

\begin{abstract}
We consider a linearised model of incompressible inviscid flow. Using
a regularisation based on the Hodge Laplacian we prove existence and uniqueness of weak
solutions for smooth domains. The model problem is then discretised
using H(div)-conforming finite element methods, for which
we prove error estimates for the velocity approximation in the $L^2$-norm of order  $O(h^{k+\frac12})$. We also prove error estimates for
the pressure error in the $L^2$-norm.
\end{abstract}

\maketitle

\section{Introduction}
The use of H(div)-conforming finite element methods for the approximation of incompressible
flow at high Reynolds number has been
receiving increasing attention from the research community
recently \cite{GSS17,NC18,SL18}. By construction such methods can satisfy the divergence-free
condition exactly. The lack of $H^1$-conformity is handled using
techniques drawing on ideas from discontinuous Galerkin methods \cite{GRW05}, resulting in
several possible different choices for the discretisation of the
transport term and the viscous term. For the former one may either design an energy conserving method
using central fluxes, or one may opt for a dissipative alternative in
the form of upwind fluxes. The latter were shown in \cite{GSS17} to be
more robust than the former, as is the case for  discontinuous Galerkin (DG)
methods. For DG-methods applied to scalar problems it is well known that thanks to the
dissipative properties of the upwind flux one may prove an error
estimate in the $L^2$-norm, of the form (see, e.g., \cite{JP86})
\begin{equation}\label{eq:L2error}
\|u - u_h\|_{L^2(\Omega)} \leq C h^{k+\frac12} |u|_{H^{k+1}(\Omega)},
\end{equation}
where $u$ is the exact solution, $u_h$ its DG-approximation, $\Omega
\subset \mathbb{R}^d$, $d=2,3$.
is the computational domain, $h$ the mesh parameter, and finally $k$
the polynomial degree of the approximation space.
On special meshes one can in fact prove optimal estimates with rate $h^{k+1}$ for upwind DG methods applied to scalar problems 
\cite{cockburn2008optimal, richter1988optimal}. However, as it is
shown in \cite{peterson1991note}, the result \eqref{eq:L2error} is
sharp on general meshes.

Estimates of the type  \eqref{eq:L2error} are also the best that are known for either
stabilised conforming finite element approximations, or fully DG methods, of laminar
solutions of the Navier-Stokes' equations in the high Reynolds number regime
\cite{BF07,HS90}, or the incompressible Euler equations \cite{JS86,Bu15}. The robustness of the H(div)-conforming
elements in the case of vanishing viscosity was shown in \cite{KS11}
for the case of the Brinkman problem, i.e. without the convection
terms. 
Despite all the work
quoted above, there seems
to be no proof of an error estimate of the type \eqref{eq:L2error} for
finite element methods using H(div)-conforming elements applied to
incompressible flow problems (see the discussion in \cite{SL18,NC18}).

The purpose of this work is to fill the gap mentioned in the last paragraph. That is, proving an estimate of the type
\eqref{eq:L2error} for finite element methods approximating a stationary linearised model of inviscid
flow and using H(div)-conforming approximation spaces for the velocity
approximation. Both the spaces designed by Raviart and Thomas \cite{RT}
and by Brezzi, Douglas and Marini \cite{BDM} enter the framework. As stabilising fluxes, these need to be either upwind, or,
in case of central fluxes, an additional penalty term on the jump of the tangential component
of the velocity needs to be added.
In the particular case in which the velocity is approximated using the Raviart-Thomas space
we also prove a convergence result for the 
pressure error, showing that the approximate pressure converges  to the
exact pressure in the $L^2$-norm also with the rate $O(h^{k+\frac12})$. For the BDM space the rate
$O(h^{k+\frac12})$ is obtained for the projection of the error onto
the pressure space, but since in this case the pressure space is of
polynomial degree $k-1$, this is a superconvergence result. 
\subsection{Linear model problem}\label{sec:linear}
To keep the
discussion as simple as possible we consider the following linear
model problem.

Find a velocity $\bu$ and a pressure $p$ satisfying
\begin{subequations}\label{pde}
\begin{alignat}{2}
\dive (\bu \otimes \bbeta) + \sigma \bu+ \nabla p= &\bff \quad  && \text{ in }  \Omega\,, \\
\dive \bu=&0  \quad  && \text{ in } \Omega\,,  \\
\bu \cdot \bn=& 0  \quad && \text{ on } \Gamma.
\end{alignat}
\end{subequations}
We think of $\bu$ and $\bbeta$ as column vectors and we set $\bu \otimes \bbeta = \bu \bbeta^t$. We assume that $\dive \bbeta=0$ and that $\sigma\in L^{\infty}(\Omega)$ with $\sigma(\bx)\ge \sigma_0^{}  > 0$ 
almost everywhere in $\Omega$.  We assume that $\bbeta \cdot \bn=0$ on
$\Gamma$. In spite of it being the natural candidate for a model
problem for the
development and analysis of numerical methods for inviscid flow this
model does not seem to have been considered in the literature. Below
we will first discuss the flow modelling leading to the system
\eqref{pde}. 

To obtain the stationary linear model problem \eqref{pde} from the
incompressible Euler equations, assume that a stationary solution to
the latter $\bbeta$, is subject to a smooth, exponentially
growing perturbation of the right hand side of the momentum equation of the form:
\[
\tilde \ff(\bx,t) := \ff(\bx) \exp (\sigma t), \quad \sigma \in
\mathbb{R}\setminus 0.
\]
Writing the perturbed solution $\bbeta + \tilde \bu$ where $\tilde \bu(x,t)$ is the perturbation
resulting from the pertubation of the right hand side and neglecting
quadratic terms in the perturbation $\tilde \bu$, we may write the
linearised momentum equation 
\begin{equation}\label{eq:momentum}
\partial_t \tilde \bu + \dive (\tilde \bu \otimes \bbeta) +  \dive
(\bbeta \otimes \tilde\bu) +  \nabla \tilde p
=\tilde  \ff(\bx,t).
\end{equation}
With the above choice of perturbation we may write the solution on the
separated form 
\[
\tilde \bu(\bx,t) = \bu(\bx) \exp(\sigma t).
\]
Injecting this expression in \eqref{eq:momentum} we arrive at the
following stationary form for the space varying part of the
perturbation
\begin{equation}\label{eq:stat_momentum}
\sigma \bu + \dive (\bu \otimes \bbeta) + \dive
(\bbeta \otimes \bu) +\nabla p = \ff(\bx).
\end{equation}
To further simplify the model problem we finally drop the second term in the
left hand side of \eqref{eq:stat_momentum}. Since $\dive
(\bbeta \otimes \bu)  = \bu \cdot \nabla \bbeta$, this is a non-essential
term which can be absorbed in the reaction term under suitable assumptions on $\sigma$
and $\bbeta$.

It is easy to construct solutions to the system
\eqref{pde}. Examples of such solutions in the unit square are
\begin{enumerate}
\item x-independent solution.\\
Let $\bbeta \cdot \bn = 0$ on $y=0$ and $y=1$ and $\bbeta$ is defined to be
periodic at $x=0$ and $x=1$. Then for any function $\varphi:\R
\mapsto \R$, $\varphi \in [C^1(\R)]^2$ a solution is given by:
\[
\bbeta := \left(
\begin{array}{c}
\varphi(y) \\
0
\end{array}
 \right).
\]
The associated pressure is $p=0$.
\item Stationary vortex sheet.\\
Let $\bbeta \cdot \bn = 0$ on the boundaries of the square and define the
streamfunction $\varphi
(x,y):= \sin (n \pi x) \sin (n \pi y)$, corresponding to the vorticity
$\omega := \Delta \varphi = - 2 n^2 \pi^2
  \sin (n \pi x) \sin (n \pi y) = -2  n^2 \pi^2 \varphi$ with $n$ a positive integer. Then define:
\begin{equation}\label{eq:vel_vortex}
\bbeta := \left(
\begin{array}{c}
\partial_y \varphi(x,y) \\
-\partial_x \varphi(x,y)
\end{array}
 \right).
\end{equation}
Since $\bbeta \cdot \nabla \omega = -2 n^2 \pi^2 (\partial_y
\varphi(x,y) \partial_x \varphi(x,y)- \partial_x \varphi(x,y) \partial_y
\varphi(x,y)) = 0$ we see that $\bbeta$ is a solution to the
two-dimensional stationary equations of inviscid flow. It is
straightforward to verify that the velocity pressure formulation is
satisfied for the pressure, 
\begin{equation}\label{eq:press_vortex}
p = n^2 \pi^2 (\cos^2 (n \pi x)-\sin^2 (n \pi y))/2.
\end{equation}
\end{enumerate}
In both examples (1) and (2) we achieve a problem on the form \eqref{pde} by taking $\ff = \sigma
\bbeta$ and the solution is then $\bu = \bbeta$.
\subsection{Outline of paper}
We
prove existence of solutions of the model problem \eqref{pde} and uniqueness for $\sigma$
large enough, on smooth domains, in section \ref{sec:wp}.
The H(div)-conforming upwind finite element
methods are introduced and analysed in section \ref{sec:upw}.
Finally in section \ref{sec:num} we illustrate the theory by computing
approximations to the example (2) above. 

\section{Notation and preliminary results}
The partial differential equation will be posed on an
  open polyhedral domain $\Omega\subseteq\mathbb{R}^d, d=2,3$ with
  Lipschitz boundary $\Gamma$. For some of the theoretical results we
  will assume a smoother boundary.
 We adopt standard notation for Sobolev and Lebesgue spaces. In particular, for $D\subset\Omega$ we denote by $(\cdot,\cdot)_D^{}$ the $L^2(D)$ inner product
(without making a distinction between scalar and vector and tensor-valued functions). 
For $D=\Omega$ we drop the subindex in the above notation.
The norm in $L^2(D)$ will be denoted by $\|\cdot\|_D^{}$.  By $W^{m,p}(D), m\ge 0, 1\le p\le \infty$ 
we will denote the functions in $L^p(D)$, with distributional derivatives up to order $m$
belonging to $L^p(D)$, with norm (seminorm) $\|\cdot\|_{m,p,D}^{}$ ($|\cdot|_{m,p,D}^{}$).
For $p=2$ we denote $H^m(D)=W^{m,2}(D)$, and the corresponding norm is
denoted $\|\cdot\|_{m,D}$. 
As usual, $H^m_0(D)$ 
denotes the closure of $C^\infty_0(D)$ in the
$\|\cdot\|_{m,D}^{}$-norm. We also denote by $L^2_0(D)$ the space of
$L^2(D)$ functions with zero mean value in $D$. All spaces for vector-valued
functions will be denoted by boldface notation, e.g.,
$\bH^1(D)=[H^1(D)]^d$, hence we denote by $\bH(\dive,D)$ the space of
$\bL^2(D)$ functions with distributional divergence in $\bL^2(D)$,  $\bH_0^{}(\dive,D)=\{\bv\in \bH(\dive,D):\bv\cdot\bn =0\;\textrm{on}\;\partial D\}$, and $\bH(\curl,D)$ denotes the space
of $\bL^2(D)$ functions with distribution curl in $\bL^2(D)$. 

Below we will make use of the following preliminary result (for its proof, see, e.g., \cite{girault2012finite}).
\begin{lemma}\label{infsup}
There exists a constant $C>0$ such that for every $q \in L_0^2(\Omega)$ there exists $\bv \in \bH_0^1(\Omega)$ satisfying
\begin{equation*}
\dive \bv=q \qquad \text{ in } \Omega,
\end{equation*}
\begin{equation*}
\|\nabla \bv\|_{\Omega} \le C \|q\|_{\Omega}.
\end{equation*}
\end{lemma}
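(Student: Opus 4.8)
The plan is to recognise this as the classical surjectivity of the divergence operator $\dive\colon \bH^1_0(\Omega)\to L^2_0(\Omega)$ --- equivalently the Ladyzhenskaya--Babu\v{s}ka--Brezzi inf-sup condition for the Stokes problem --- and to establish it by a functional-analytic (closed range) argument. First I would note that, since $\int_\Omega \dive\bv = \int_\Gamma \bv\cdot\bn = 0$ for every $\bv \in \bH^1_0(\Omega)$, the operator $B := \dive$ does map $\bH^1_0(\Omega)$ continuously into $L^2_0(\Omega)$, and that the assertion of the lemma is precisely that $B$ is surjective with a right inverse bounded by $C$ in the $\|\nabla\,\cdot\,\|_\Omega$-norm. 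By the Banach closed range theorem, in the Hilbert-space form used throughout mixed finite element theory, this is equivalent to a lower bound on the adjoint: there exists $\beta>0$ with $\|B^\ast q\|_{\bH^{-1}(\Omega)} \ge \beta\,\|q\|_\Omega$ for all $q \in L^2_0(\Omega)$, and then one may take $C = 1/\beta$.

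Second, I would identify the adjoint. For $\bv \in \bH^1_0(\Omega)$ and $q \in L^2(\Omega)$ integration by parts gives $(\dive\bv, q) = -(\bv, \nabla q)$, so that $B^\ast = -\nabla$ viewed as an operator from $L^2(\Omega)$ into $\bH^{-1}(\Omega) = (\bH^1_0(\Omega))'$. The bound to be proved therefore reads
\[
\|\nabla q\|_{\bH^{-1}(\Omega)} \ge \beta\,\|q\|_\Omega \qquad \text{for all } q \in L^2_0(\Omega).
\]

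Third --- and this is the analytical heart --- I would deduce this lower bound from the Ne\v{c}as inequality $\|q\|_\Omega \les \|q\|_{H^{-1}(\Omega)} + \|\nabla q\|_{\bH^{-1}(\Omega)}$, valid for all $q \in L^2(\Omega)$ on a Lipschitz domain, combined with a Peetre--Tartar compactness argument that discards the lower-order term on $L^2_0(\Omega)$. Concretely, were the displayed bound to fail there would exist $q_n \in L^2_0(\Omega)$ with $\|q_n\|_\Omega = 1$ and $\|\nabla q_n\|_{\bH^{-1}(\Omega)} \to 0$; Ne\v{c}as' inequality keeps $(q_n)$ bounded in $L^2(\Omega)$, the compact embedding $L^2(\Omega)\hookrightarrow H^{-1}(\Omega)$ extracts a subsequence converging strongly in $H^{-1}(\Omega)$, and Ne\v{c}as' inequality applied to the differences $q_n-q_m$ then forces $(q_n)$ to be Cauchy in $L^2(\Omega)$. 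Its limit $q$ satisfies $\|q\|_\Omega = 1$, $q \in L^2_0(\Omega)$ and $\nabla q = 0$, hence $q$ is a constant of zero mean, i.e. $q=0$, a contradiction.

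The main obstacle is the Ne\v{c}as inequality itself, which on a general Lipschitz domain is genuinely nontrivial and carries the real content of the lemma; on a smooth domain it can be obtained by flattening the boundary, reducing to the half-space where the estimate follows from a Fourier-multiplier computation, and patching by a partition of unity. A fully constructive alternative would bypass the closed range theorem via the Bogovski\u{\i} operator: on a domain star-shaped with respect to a ball one writes an explicit integral formula $\bv(\bx) = \int_\Omega \bG(\bx,\by)\,q(\by)\,d\by$, with kernel built from a cutoff supported in that ball, verifies $\dive\bv = q$ and $\bv \in \bH^1_0(\Omega)$ together with the norm bound directly, and extends to general Lipschitz domains through a finite star-shaped decomposition. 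For the present purposes I would favour the closed range argument for its brevity, quoting Ne\v{c}as' inequality as the single external ingredient.
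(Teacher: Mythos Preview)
Your proof sketch is correct and follows one of the standard routes to this result. Note, however, that the paper does not actually prove this lemma: it is stated as a preliminary result with the remark ``for its proof, see, e.g., \cite{girault2012finite}'' and nothing more. So there is no proof in the paper to compare against; the authors simply defer to the literature.

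That said, what you outline --- the closed-range/adjoint reformulation reducing the statement to the Ne\v{c}as inequality $\|q\|_\Omega \lesssim \|q\|_{H^{-1}(\Omega)} + \|\nabla q\|_{\bH^{-1}(\Omega)}$, followed by a Peetre--Tartar compactness argument to eliminate the lower-order term on $L^2_0(\Omega)$ --- is exactly one of the arguments presented in the Girault--Raviart reference the paper cites, and your alternative via the Bogovski\u{\i} integral operator is the other standard route. Both are sound; the Bogovski\u{\i} construction has the advantage of being explicit and giving additional $W^{m,p}$ mapping properties, while the Ne\v{c}as/closed-range argument is shorter once the inequality is granted. Your identification of the Ne\v{c}as inequality as the genuine analytical content, and of the Lipschitz-boundary case as the nontrivial step, is accurate.
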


Also in  \cite{girault2012finite} the proof of the following result can be found.
\begin{proposition}
The following bound holds
\begin{equation}\label{l2}
\|\bv\|_{\Om} \le C (\| \dive \bv \|_{\Om} +\| \curl \bv\|_{\Om}) \qquad \forall \bv \in \bH_0(\dive,\Omega) \cap \bH(\curl,\Omega). 
\end{equation}
If we assume that $\partial \Om$ is $C^{1,1}$ 
 \begin{equation}\label{h1}
\|\nabla \bv\|_{\Om} \le K(\| \dive \bv \|_{\Om} +\| \curl \bv\|_{\Om}) \qquad \forall \bv \in  \bH_0(\dive,\Omega) \cap \bH(\curl,\Omega). 
\end{equation}
Finally, if $\Om$ is a convex Lipschitz polyhedron \cite{ABDG98}, or a convex more regular domain, then
\begin{equation}\label{h1convex}
\|\nabla \bv\|_{\Om}^2 \le \| \dive \bv \|_{\Om}^2 +\| \curl \bv\|_{\Om}^2 \qquad \forall \bv \in  \bH_0(\dive,\Omega) \cap \bH(\curl,\Omega). 
\end{equation}
\end{proposition}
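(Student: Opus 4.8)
The plan is to reduce all three estimates to a single integration-by-parts (Green) identity that expresses $\|\nabla\bv\|_\Om^2$ through $\|\dive\bv\|_\Om^2$ and $\|\curl\bv\|_\Om^2$ plus a boundary term encoding the curvature of $\Gamma$. First I would establish, for $\bv\in[C^\infty(\overline\Om)]^d$, the pointwise algebraic identity
\begin{equation*}
|\nabla\bv|^2 = (\dive\bv)^2 + |\curl\bv|^2 + \dive\bG(\bv),
\end{equation*}
where $\bG(\bv)$ is an explicit field, quadratic in $\bv$ and $\nabla\bv$, whose entries are null-Lagrangian combinations of the components of $\bv$ and its first derivatives. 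Integrating over $\Om$ and applying the divergence theorem turns the last term into $\int_\Gamma \bG(\bv)\cdot\bn\,ds$; imposing $\bv\cdot\bn=0$ on $\Gamma$ and rewriting the remaining tangential contribution through the Weingarten map of $\Gamma$ reduces this surface integral to a quadratic form $\int_\Gamma \mathcal S(\bv_t,\bv_t)\,ds$ in the tangential trace $\bv_t$, with $\mathcal S$ built from the second fundamental form. This yields
\begin{equation*}
\|\nabla\bv\|_\Om^2 = \|\dive\bv\|_\Om^2 + \|\curl\bv\|_\Om^2 - \int_\Gamma \mathcal S(\bv_t,\bv_t)\,ds,
\end{equation*}
which, after extension to $\bH_0(\dive,\Om)\cap\bH(\curl,\Om)$ by density, is the engine for \eqref{h1} and \eqref{h1convex}.

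For \eqref{h1convex} I would use convexity directly: the second fundamental form of the boundary of a convex set is nonnegative, so $\mathcal S(\bv_t,\bv_t)\ge0$ and the boundary term can only lower the right-hand side, giving the constant-one bound $\|\nabla\bv\|_\Om^2\le\|\dive\bv\|_\Om^2+\|\curl\bv\|_\Om^2$. For a convex polyhedron the curvature is a nonnegative measure concentrated on edges and vertices, and this favourable sign persists, which is why the same inequality survives in the polyhedral case of \cite{ABDG98}. For \eqref{h1}, only boundedness of $\mathcal S$ is needed, which is exactly what the $C^{1,1}$ hypothesis furnishes; then $|\int_\Gamma\mathcal S(\bv_t,\bv_t)\,ds|\le C\|\bv\|_{L^2(\Gamma)}^2$, and a multiplicative trace inequality $\|\bv\|_{L^2(\Gamma)}^2\le\eta\|\nabla\bv\|_\Om^2+C_\eta\|\bv\|_\Om^2$ lets me absorb $\eta\|\nabla\bv\|_\Om^2$ into the left side, leaving a remainder controlled by $\|\bv\|_\Om^2$ and hence by \eqref{l2}.

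The $L^2$ estimate \eqref{l2} I would prove separately by compactness, since it holds on general Lipschitz domains where the $\bH^2$-regularity underpinning the Green identity can fail. The two ingredients are (i) the compact embedding $\bH_0(\dive,\Om)\cap\bH(\curl,\Om)\hookrightarrow\bL^2(\Om)$ (Weber's compactness theorem), and (ii) triviality of the space of $\bv$ with $\dive\bv=0$, $\curl\bv=0$ and $\bv\cdot\bn=0$; on a simply connected domain such a $\bv$ equals $\nabla\phi$ with $\phi$ harmonic and $\partial_\bn\phi=0$, hence constant, so $\bv=0$. A Peetre--Tartar argument then finishes: were \eqref{l2} false, I would take $\bv_n$ with $\|\bv_n\|_\Om=1$ and $\|\dive\bv_n\|_\Om+\|\curl\bv_n\|_\Om\to0$, pass to an $\bL^2$-convergent subsequence by (i), and identify the limit as a nonzero element of the kernel (ii), a contradiction.

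The hard part will be making the boundary term rigorous outside the smooth category. The Green identity genuinely needs $\bv\in\bH^2(\Om)$ (or $\bH^1$ with an $L^2$ boundary-curvature term), so I must verify density of smooth fields in $\bH_0(\dive)\cap\bH(\curl)$ under the graph norm \emph{together with} continuity of the tangential trace in the topology in which $\mathcal S(\bv_t,\bv_t)$ is meaningful; this is precisely where $C^{1,1}$ is used for \eqref{h1}. For the convex polyhedral case, where the curvature lives on lower-dimensional strata, I would instead exhaust $\Om$ by smooth convex subdomains and pass to the limit while preserving the sign of the boundary term, or appeal directly to the $\bH^1$-regularity statement of \cite{ABDG98}.
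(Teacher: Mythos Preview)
The paper does not give its own proof of this proposition: it simply states the result and refers to \cite{girault2012finite} for \eqref{l2} and \eqref{h1}, and to \cite{ABDG98} for the convex polyhedral case \eqref{h1convex}. There is therefore no ``paper's proof'' to compare against beyond those citations.

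Your outline is correct and is in fact the standard route found in those references. The Green identity expressing $\|\nabla\bv\|_\Om^2$ as $\|\dive\bv\|_\Om^2+\|\curl\bv\|_\Om^2$ plus a curvature-weighted boundary quadratic form in the tangential trace is exactly how \eqref{h1} and \eqref{h1convex} are obtained (this is Lemma~2.11 and Theorem~3.9 in Chapter~I of Girault--Raviart, and the identity itself appears for instance as Theorem~3.1.1.1 in Grisvard); your reading of convexity as nonnegativity of the second fundamental form is the right one, and absorbing the bounded-curvature term via a trace inequality is the $C^{1,1}$ argument. Your Peetre--Tartar proof of \eqref{l2} via Weber compactness plus triviality of the tangential harmonic fields is likewise the classical argument. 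You correctly flag that \eqref{l2} as stated needs the latter kernel to vanish, which amounts to a topological hypothesis (simply connected $\Omega$, or more precisely vanishing first Betti number) that the paper leaves implicit. Your remarks on the density and polyhedral-limit technicalities are accurate; for the convex polyhedron one indeed either invokes \cite{ABDG98} directly or runs an approximation by smooth convex subdomains, as you suggest.
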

 Finally, for two $3 \times 3$ matrices $A$ and $B$ with rows $A_i$ and $B_i$ ($i=1,2,3$) we define
 $C:=A \times B$ with $C_1= A_2 \cdot B_3-A_3 \cdot B_2$, $C_2= -(A_1 \cdot B_3-A_3 \cdot B_1)$ $C_3= A_1 \cdot B_2-A_2 \cdot B_1)$, and
a simple calculation gives the following identity.  
 \begin{lemma}\label{derivative-identity}
 It holds
 \begin{equation*}
 \curl( \bbeta \cdot \nabla \bv)= \bbeta \cdot \nabla (\curl \bv)+ ((\nabla \bbeta)^t \times \nabla \bv) . 
 \end{equation*}
 \end{lemma}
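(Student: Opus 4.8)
The plan is to verify the identity by a direct component-wise computation, since both sides are vector fields in $\mathbb{R}^3$ and the only tools required are the product rule together with the symmetry of second partial derivatives. First I would pin down the index conventions: writing $\bbeta=(\beta_1,\beta_2,\beta_3)^t$ and $\bv=(v_1,v_2,v_3)^t$, the directional derivative $\bw:=\bbeta\cdot\nabla\bv$ has components $w_i=\sum_k\beta_k\,\partial_k v_i$; the Jacobian $\nabla\bbeta$ has entries $(\nabla\bbeta)_{ij}=\partial_j\beta_i$, so that the $i$-th row of $(\nabla\bbeta)^t$ equals $\partial_i\bbeta$; and the $i$-th row of $\nabla\bv$ equals $\nabla v_i$. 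Fixing these conventions is what makes the otherwise awkward matrix cross product of the statement tractable.

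Next I would compute the first component of the left-hand side. Applying the product rule and then commuting $\partial_j$ with $\partial_k$ gives
\begin{equation*}
(\curl\bw)_1=\partial_2 w_3-\partial_3 w_2
=\sum_k(\partial_2\beta_k)(\partial_k v_3)-\sum_k(\partial_3\beta_k)(\partial_k v_2)
+\sum_k\beta_k\,\partial_k(\partial_2 v_3-\partial_3 v_2),
\end{equation*}
where all terms carrying second derivatives of $\bv$ have been collected in the last sum. This last sum is exactly $\sum_k\beta_k\,\partial_k(\curl\bv)_1=(\bbeta\cdot\nabla\,\curl\bv)_1$, which produces the first term on the right-hand side.

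Then I would identify the two remaining sums with the matrix cross product term. By the definition of $A\times B$ recalled just before the statement, the first component of $(\nabla\bbeta)^t\times\nabla\bv$ is $(\partial_2\bbeta)\cdot(\nabla v_3)-(\partial_3\bbeta)\cdot(\nabla v_2)=\sum_k(\partial_2\beta_k)(\partial_k v_3)-\sum_k(\partial_3\beta_k)(\partial_k v_2)$, which matches the two first-derivative sums above. Hence the first components of the two sides agree, and the second and third components follow by the same argument applied under the cyclic permutations $(1,2,3)\mapsto(2,3,1)\mapsto(3,1,2)$.

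The computation presents no genuine obstacle; the only point requiring care is the bookkeeping that matches the surviving first-derivative terms to the nonstandard matrix cross product of the statement. For that reason I would settle the row/column conventions for $\nabla\bbeta$, its transpose, and $\nabla\bv$ before expanding anything, so that the dot products $(\partial_i\bbeta)\cdot(\nabla v_j)$ line up correctly with the definition of $A\times B$.
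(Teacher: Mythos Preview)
Your proposal is correct and is precisely the ``simple calculation'' the paper alludes to but does not spell out: a direct component-wise expansion using the product rule and equality of mixed partials, followed by matching the remaining first-order terms to the paper's definition of the matrix cross product. There is nothing to add.
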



\section{Well-Posedness of the model problem}\label{sec:wp}
It appears that the linear inviscid model \eqref{pde} has not been
analysed mathematically. Hence, will here first study its
well-posedness before proceeding with the finite element analysis.  Transport problems have been studied by several authors (e.g. \cite{girault2010lp,fichera1963unified, da1986stationary}).  However,  the incompressibility constraint seems to add new challenges to the analysis and we cannot apply the techniques of the above mentioned papers directly. 
The weak formulation of \eqref{pde} is given by:

Find $\bu \in \bH_0(\dive,\Omega)$ and $p \in L_0^2(\Omega)$ that satisfy
\begin{subequations}\label{weak1}
\begin{alignat}{2}
-(\bu, \bbeta \cdot \nabla \bv)+(\sigma \bu, \bv)-(p, \dive \bv)=&(\bff, \bv) \quad  &&\text{ for all  } \bv \in \bH^1(\Omega) \cap \bH_0(\dive,\Omega), \label{weak1_1}\\
(\dive \bu, q)=&0 \quad && \text{ for all  } q \in L_0^2(\Omega)  \label{weak1_2}.
\end{alignat}
\end{subequations}

\subsection{Existence of weak solutions \eqref{weak1}}
In order to prove existence of the problem \eqref{weak1} we will regularize it. Consider the following problem:
Find a velocity $\bu_{\eps}$ and a pressure $p_{\eps}$ satisfying
\begin{subequations}\label{pdeeps}
\begin{alignat}{2}
-\eps \Delta \bu_{\eps}+ \dive (\bu_\eps \otimes \bbeta) + \sigma \bu_{\eps}+ \nabla p_{\eps}= &\bff \quad  && \text{ in }  \Omega\,, \\
\dive \bu_{\eps}=&0  \quad  && \text{ in } \Omega\,,  \\
\bu_{\eps}=& 0 \qquad && \text{ on } \Gamma.
\end{alignat}
\end{subequations}
The weak formulation of \eqref{pdeeps} is as follows:
Find $(\bu_{\eps},p_{\eps}) \in \bH_0^1(\Omega)\times L_0^2(\Omega)$ such that
\begin{subequations}\label{weak1eps-Oseen}
\begin{alignat}{2}
\eps (\nabla \bu_{\eps}, \nabla \bv)-(\bu_{\eps}, \bbeta \cdot \nabla \bv)+(\sigma \bu_{\eps}, \bv)-(p_{\eps}, \dive \bv)=&(\bff, \bv) \quad  &&\text{ for all  } \bv \in \bH_0^1(\Omega), \label{weak1eps1-0}\\
(\dive \bu_{\eps}, q)=&0 \quad && \text{ for all  } q \in L_0^2(\Omega)\,.
\end{alignat}
\end{subequations}

\begin{lemma}\label{Lem2.1}
There exists a unique solution $\bu_{\eps} \in \bH_0^1(\Omega)$ and $p_{\eps} \in L_0^2(\Omega)$ to the problem \eqref{weak1eps-Oseen}.
In addition, if $\bbeta\in \bL^\infty(\Omega)$, then the following bound holds
\begin{equation}\label{bound-Oseen}
\|p_{\eps}\|_{\Omega}+  \|\sqrt{\sigma}\,\bu_{\eps}\|_{\Omega}  +\sqrt{\eps} \| \nabla \bu_{\eps}\|_{\Omega} \le C\, \|\bff\|_{\Omega}\,,
\end{equation}
where the constant $C$ depends on $\sigma$ and $\|\bbeta\|_{\infty,\Omega}^{}$, but not on negative powers of $\varepsilon$.
 \end{lemma}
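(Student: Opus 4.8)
The plan is to recognise \eqref{weak1eps-Oseen} as a linear saddle-point problem of Oseen type and to settle its well-posedness by the classical Brezzi theory, after which the bound \eqref{bound-Oseen} follows from an energy estimate combined with the inf-sup condition. Write $a(\bu,\bv):=\eps(\nab\bu,\nab\bv)-(\bu,\bbeta\cdot\nab\bv)+(\sigma\bu,\bv)$ and $b(\bv,q):=-(q,\dive\bv)$. The first key observation is that the convection term is skew-symmetric on $\bH_0^1(\Omega)$: for $\bw\in\bH_0^1(\Omega)$, using $(\bbeta\cdot\nab\bw)_i=\sum_j\beta_j\partial_j w_i$ and integration by parts, $(\bw,\bbeta\cdot\nab\bw)=\tfrac12\int_\Omega\bbeta\cdot\nab(|\bw|^2)=-\tfrac12\int_\Omega(\dive\bbeta)|\bw|^2+\tfrac12\int_\Gamma(\bbeta\cdot\bn)|\bw|^2=0$, since $\dive\bbeta=0$ and $\bw|_\Gamma=0$. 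Consequently $a(\bw,\bw)=\eps\|\nab\bw\|_{\Omega}^2+\|\sqrt{\sigma}\,\bw\|_{\Omega}^2\ge\min(\eps,\sigma_0)\|\bw\|_{1,\Omega}^2$, so $a$ is coercive on all of $\bH_0^1(\Omega)$ (the coercivity constant depends on $\eps$, which is harmless for establishing existence and uniqueness).

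Continuity of $a$ and of $b$ is immediate, with constants depending on $\eps$, $\|\sigma\|_{\infty,\Omega}$ and $\|\bbeta\|_{\infty,\Omega}$. The inf-sup condition for $b$ is supplied directly by Lemma \ref{infsup}: given $q\in L_0^2(\Omega)$ one takes $\bv\in\bH_0^1(\Omega)$ with $\dive\bv=q$ and $\|\nab\bv\|_{\Omega}\le C\|q\|_{\Omega}$, whence $\sup_{\bv}b(\bv,q)/\|\bv\|_{1,\Omega}\ge c\|q\|_{\Omega}$. Coercivity of $a$ on the whole space together with this inf-sup condition yield, by the standard theory for saddle-point problems (see \cite{girault2012finite}), the existence of a unique pair $(\bu_{\eps},p_{\eps})\in\bH_0^1(\Omega)\times L_0^2(\Omega)$.

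For the a priori estimate, I would first test \eqref{weak1eps1-0} with $\bv=\bu_{\eps}$. The pressure term vanishes because $\dive\bu_{\eps}=0$, and the convection term vanishes by the skew-symmetry above, leaving $\eps\|\nab\bu_{\eps}\|_{\Omega}^2+\|\sqrt{\sigma}\,\bu_{\eps}\|_{\Omega}^2=(\bff,\bu_{\eps})$. Using $\sqrt{\sigma_0}\,\|\bu_{\eps}\|_{\Omega}\le\|\sqrt{\sigma}\,\bu_{\eps}\|_{\Omega}$ together with Cauchy--Schwarz and Young's inequality to absorb the right-hand side, one obtains $\sqrt{\eps}\,\|\nab\bu_{\eps}\|_{\Omega}+\|\sqrt{\sigma}\,\bu_{\eps}\|_{\Omega}\le C\|\bff\|_{\Omega}$ with $C=C(\sigma_0)$, which is already free of negative powers of $\eps$.

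It remains to bound $\|p_{\eps}\|_{\Omega}$, and this is where the only real care is needed. Choose, via Lemma \ref{infsup}, $\bv\in\bH_0^1(\Omega)$ with $\dive\bv=p_{\eps}$ and $\|\nab\bv\|_{\Omega}\le C\|p_{\eps}\|_{\Omega}$; then $\|p_{\eps}\|_{\Omega}^2=(p_{\eps},\dive\bv)=\eps(\nab\bu_{\eps},\nab\bv)-(\bu_{\eps},\bbeta\cdot\nab\bv)+(\sigma\bu_{\eps},\bv)-(\bff,\bv)$ from \eqref{weak1eps1-0}. The convection, reaction and forcing terms are controlled by $\|\sqrt{\sigma}\,\bu_{\eps}\|_{\Omega}$ and $\|\bff\|_{\Omega}$ using $\|\bbeta\|_{\infty,\Omega}$, $\|\sigma\|_{\infty,\Omega}$, the Poincaré inequality and the velocity estimate. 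The main obstacle is the viscous term: since the energy estimate only controls $\sqrt{\eps}\,\|\nab\bu_{\eps}\|_{\Omega}$, I would write $\eps(\nab\bu_{\eps},\nab\bv)\le\sqrt{\eps}\,\bigl(\sqrt{\eps}\,\|\nab\bu_{\eps}\|_{\Omega}\bigr)\|\nab\bv\|_{\Omega}$, so that this contribution is bounded by $C\sqrt{\eps}\,\|\bff\|_{\Omega}\|p_{\eps}\|_{\Omega}$ and hence stays free of negative powers of $\eps$ (indeed it vanishes as $\eps\to0$). Dividing the resulting inequality by $\|p_{\eps}\|_{\Omega}$ gives $\|p_{\eps}\|_{\Omega}\le C\|\bff\|_{\Omega}$ and completes \eqref{bound-Oseen}.
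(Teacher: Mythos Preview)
Your proof is correct and follows essentially the same route as the paper: existence and uniqueness via the Babu\v{s}ka--Brezzi theory, the energy identity obtained by testing with $\bu_{\eps}$ (using the skew-symmetry of the convection term), and the pressure bound via Lemma~\ref{infsup} together with the $\eps$-independent velocity estimate. You simply spell out in more detail what the paper compresses into a few lines, including the explicit verification of skew-symmetry and the splitting $\eps(\nab\bu_{\eps},\nab\bv)\le\sqrt{\eps}\bigl(\sqrt{\eps}\,\|\nab\bu_{\eps}\|_{\Omega}\bigr)\|\nab\bv\|_{\Omega}$ that keeps the pressure bound free of negative powers of $\eps$.
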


\begin{proof}
Existence and uniquness of a solution of \eqref{weak1eps-Oseen} follows from  the Babuska-Brezzi theory \cite{BBF13}. Testing the equation with $\bu_{\eps}$ we get
\begin{equation}
\eps \|\nabla \bu_{\eps}\|_{\Omega}^2 +\|\sqrt{\sigma}\,\bu_{\eps}\|_{\Omega}^2 =(\bff, \bu_{\eps}).
\end{equation}
Therefore, we have the bound
\begin{equation}\label{bound-1}
\eps \|\nabla \bu_{\eps}\|_{\Omega}^2 + \frac{1}{2} \|\sqrt{\sigma}\,\bu_{\eps}\|_{\Omega}^2 \le \frac{1}{2\sigma_0^{}} \|\bff\|_{\Omega}^2.
\end{equation}
Moreover, using  Lemma \ref{infsup} and \eqref{weak1eps1-0} we have that
\begin{equation*}
\|p_{\eps}\|_{\Omega} \le C \,\Big(\eps \|\nabla \bu_{\eps}\|_{\Omega}+ \|\bbeta\|_{\infty,\Omega}^{} \|\bu_{\eps}\|_{\Omega} 
+ \|\sqrt{\sigma}\|_{\infty,\Omega}\,\|\sqrt{\sigma}\,\bu_{\eps}\|_\Omega+\|\bff\|_\Omega
\Big)\,,
\end{equation*}
and the proof is finished using \eqref{bound-1}.
\end{proof}

\begin{theorem}
There exists a solution $\bu \in \bL^2(\Omega)$ and $p \in L^2(\Omega)$ to \eqref{weak1}.
\end{theorem}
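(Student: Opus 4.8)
The plan is to construct the solution as a vanishing-viscosity limit of the regularised problems \eqref{weak1eps-Oseen}. First I would record that the bound \eqref{bound-Oseen} of Lemma \ref{Lem2.1} is uniform in $\eps$: since $\sigma\ge\sigma_0>0$ it yields $\|\bu_\eps\|_\Omega\le\sigma_0^{-1/2}\|\sqrt\sigma\,\bu_\eps\|_\Omega\le C$, $\|p_\eps\|_\Omega\le C$, and $\sqrt\eps\,\|\nabla\bu_\eps\|_\Omega\le C$. By reflexivity there are subsequences (not relabelled) with $\bu_\eps\rightharpoonup\bu$ weakly in $\bL^2(\Omega)$ and $p_\eps\rightharpoonup p$ weakly in $L^2(\Omega)$, with $p\in L^2_0(\Omega)$ since each $p_\eps$ has zero mean. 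The key elementary remark is that $\eps\,\|\nabla\bu_\eps\|_\Omega=\sqrt\eps\,(\sqrt\eps\,\|\nabla\bu_\eps\|_\Omega)\to0$, so every term carrying an explicit factor $\eps$ drops out in the limit.

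Next I would recover the constraints. As $\dive\bu_\eps=0$ and $\bu_\eps\in\bH_0^1(\Omega)\subset\bH_0(\dive,\Omega)$, the fields $\bu_\eps$ are bounded in $\bH(\dive,\Omega)$; hence $\bu_\eps\rightharpoonup\bu$ weakly there, $\dive\bu=0$, and \eqref{weak1_2} follows. Because the normal-trace map $\bH(\dive,\Omega)\to H^{-1/2}(\Gamma)$ is bounded and linear, hence weakly continuous, the condition $\bu\cdot\bn=0$ passes to the limit and $\bu\in\bH_0(\dive,\Omega)$. Passing to the limit in \eqref{weak1eps1-0} against a \emph{fixed} $\bv\in\bH_0^1(\Omega)$ is then immediate: the viscous term vanishes by the remark above, while $-(\bu_\eps,\bbeta\cdot\nabla\bv)\to-(\bu,\bbeta\cdot\nabla\bv)$, $(\sigma\bu_\eps,\bv)\to(\sigma\bu,\bv)$ and $(p_\eps,\dive\bv)\to(p,\dive\bv)$ follow from weak convergence since $\bbeta\cdot\nabla\bv,\ \sigma\bv,\ \dive\bv\in\bL^2(\Omega)$. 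This establishes \eqref{weak1_1} for all $\bv\in\bH_0^1(\Omega)$; equivalently, the momentum equation holds in the sense of distributions.

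The genuine difficulty is to upgrade \eqref{weak1_1} from $\bH_0^1(\Omega)$ to the strictly larger space $\bH^1(\Omega)\cap\bH_0(\dive,\Omega)$, whose elements may carry nonzero tangential traces and in which $\bH_0^1(\Omega)$ is \emph{not} dense for the $H^1$-topology needed to keep the convective term continuous. To handle this I would introduce the momentum stress $\bS:=\bu\otimes\bbeta+p\bI$. The distributional momentum equation reads $\dive\bS=\bff-\sigma\bu\in\bL^2(\Omega)$, so $\bS\in\bH(\dive,\Omega)$ row-wise and carries a normal trace $\bS\bn\in H^{-1/2}(\Gamma)$. Since the integrand of $\bS:\nabla\bv$ equals $\bu\cdot(\bbeta\cdot\nabla\bv)+p\,\dive\bv$, the generalised Green identity $\int_\Omega(\dive\bS)\cdot\bv+\int_\Omega\bS:\nabla\bv=\langle\bS\bn,\bv\rangle_\Gamma$ rewrites the left-hand side of \eqref{weak1_1} minus $(\bff,\bv)$ exactly as $-\langle\bS\bn,\bv\rangle_\Gamma$. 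Thus \eqref{weak1_1} on the full space is equivalent to the vanishing of the tangential part of $\bS\bn$, which formally holds because $\bS\bn=(\bbeta\cdot\bn)\bu+p\bn=p\bn$ is purely normal thanks to $\bbeta\cdot\bn=0$ on $\Gamma$, and so annihilates every tangential $\bv\in\bH_0(\dive,\Omega)$.

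I expect this last point to be the main obstacle, because $\bu$ lies only in $\bL^2(\Omega)$: the factor $\bu\otimes\bbeta$ does not itself belong to $\bH(\dive,\Omega)$, $\bS\bn$ lives only in $H^{-1/2}(\Gamma)$, and the identity $\bS\bn=p\bn$ cannot be read off pointwise. The cleanest route to rigour is to work at the regularised level: writing $\bS_\eps:=-\eps\nabla\bu_\eps+\bu_\eps\otimes\bbeta+p_\eps\bI$, elliptic regularity gives $\bu_\eps\in\bH^2(\Omega)$, and on $\Gamma$ one has $\bS_\eps\bn=-\eps\,\partial_n\bu_\eps+p_\eps\bn$ since $\bu_\eps=0$ and $\bbeta\cdot\bn=0$ there. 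Pairing with a tangential $\bv$ and using $\bS_\eps\rightharpoonup\bS$ in $\bH(\dive,\Omega)$ (weak continuity of the normal trace) reduces the whole claim to
\[
\langle\bS\bn,\bv\rangle_\Gamma=-\lim_{\eps\to0}\eps\,\langle\partial_n\bu_\eps,\bv\rangle_\Gamma .
\]
The remaining task is therefore to show that this vanishing-viscosity skin-friction term tends to zero, which should follow from the absence of a convective normal boundary layer (because $\bbeta\cdot\bn=0$) together with the damping $\sigma\ge\sigma_0>0$; once it is controlled, the tangential part of $\bS\bn$ vanishes, \eqref{weak1} holds on the full test space, and the existence statement follows.
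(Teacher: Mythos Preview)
Your first two paragraphs reproduce the paper's argument almost verbatim: use the uniform bounds of Lemma~\ref{Lem2.1} to extract weak limits $\bu_\eps\rightharpoonup\bu$, $p_\eps\rightharpoonup p$, check $\dive\bu=0$ and $\bu\in\bH_0(\dive,\Omega)$, and pass to the limit in \eqref{weak1eps1-0} using $\eps\|\nabla\bu_\eps\|_\Omega\to0$. The paper's proof stops exactly there and simply declares that $(\bu,p)$ satisfies \eqref{weak1}.

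You go further and notice something the paper does not discuss: the regularised identity \eqref{weak1eps1-0} is only stated for $\bv\in\bH_0^1(\Omega)$, while \eqref{weak1_1} asks for the strictly larger test space $\bH^1(\Omega)\cap\bH_0(\dive,\Omega)$, in which $\bH_0^1(\Omega)$ is \emph{not} $H^1$-dense. This is a legitimate point, and your stress-tensor reformulation via $\bS=\bu\otimes\bbeta+p\bI\in\bH(\dive,\Omega)$ is a natural way to phrase what is missing: one must show the tangential part of $\bS\bn$ vanishes.

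However, your argument is left genuinely incomplete. You reduce everything to the claim $\eps\,\langle\partial_n\bu_\eps,\bv\rangle_\Gamma\to0$ and then defer it as a ``remaining task'', appealing only to heuristics about the absence of a boundary layer. The available a~priori information does not deliver this: from $\sqrt\eps\,\|\nabla\bu_\eps\|_\Omega\le C$ and the generic elliptic estimate $\|\bu_\eps\|_{2,\Omega}\lesssim\eps^{-1}$ one obtains at best $\eps\,\|\partial_n\bu_\eps\|_{0,\Gamma}=O(1)$, not $o(1)$. Turning the intuition ``$\bbeta\cdot\bn=0$ prevents a convective normal layer'' into a rigorous bound would require a quantitative boundary-layer analysis that you have not supplied. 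So the step you yourself flag as the main obstacle is indeed a gap in your proposal; the paper's own proof, by comparison, does not attempt to address this extension of the test space at all.
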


\begin{proof}
Since  $\{\bu_{\eps}\}$  and $\{p_{\eps}\}$ are uniformly bounded in $\bH_0^{}(\dive,\Omega)$ and $L^2_0(\Omega)$, respectively,
there exists a subsequence such that 
$\bu_{\eps} \rightharpoonup \bu$ and $p_{\eps} \rightharpoonup p$ with $\bu \in \bH_0^{}(\dive,\Omega)$ and $p \in L_0^2(\Omega)$. 
Moreover, since $\dive \bu_\eps=0$, for all $\phi \in H_0^1(\Omega)$ we have $(\bu, \nabla \phi) =\lim_{\eps \rightarrow 0} (\bu_{\eps}, \nabla \phi) =\lim_{\eps \rightarrow 0}  -(\dive \bu_{\eps}, \phi) =0$, thus showing that $\dive\bu=0$ in $\Omega$.
 We then see that from \eqref{weak1eps1-0}  and the fact that $\eps \|\nabla \bu_{\eps}\|_{\Omega} \rightarrow 0 $ as $\eps\to 0$ that $\bu$ and $p$  satisfy \eqref{weak1}.
\end{proof}

\subsection{Uniqueness of weak solutions}
In general we cannot prove uniqueness of weak solutions \eqref{weak1}. However, we will be to prove existence and uniqueness of solutions in the space $\bH^1(\Omega) \cap \bH_0(\dive,\Omega)$ by making more stringent requirements on the coefficients and the boundary $\Gamma$. 
 To achieve this goal, it is necessary to introduce a different regularised (as compared to \eqref{pdeeps}) problem to prove existence of smoother solutions to \eqref{weak1}. The idea
consists in considering the folllowing regularised Hodge-Oseen problem:  Find a velocity $\bu_{\eps}$ and a pressure $p_{\eps}$ satisfying
\begin{subequations}\label{pdeeps2}
\begin{alignat}{2}
\eps \, \curl \curl \bu_{\eps}+ \dive (\bu_\eps \otimes \bbeta) + \sigma \bu_{\eps}+ \nabla p_{\eps}= &\bff \quad  && \text{ in }  \Omega\,, \\
\dive \bu_{\eps}=&0  \quad  && \text{ in } \Omega\,,  \\
\bu_{\eps}\cdot \bn =& 0 \qquad && \text{ on } \Gamma, \\
\curl \bu_{\eps} \times \bn=& 0 \qquad && \text{ on } \Gamma.
\end{alignat}
\end{subequations}
The weak formulation of \eqref{pdeeps2} reads as follows:
Find $\bu_{\eps} \in \bV:= \bH^1(\Omega) \cap \bH_0(\dive,\Omega)$ and $p_{\eps} \in L_0^2(\Omega)$ that satisfy
\begin{subequations}\label{weak1eps}
\begin{alignat}{2}
\eps (\curl \bu_{\eps}, \curl \bv)-(\bu_{\eps}, \bbeta \cdot \nabla \bv)+(\sigma \bu_{\eps}, \bv)-(p_{\eps}, \dive \bv)=&(\bff, \bv) \quad  &&\text{ for all  } \bv \in \bV, \label{weak1eps1}\\
(\dive \bu_{\eps}, q)=&0 \quad && \text{ for all  } q \in L_0^2(\Omega).
\end{alignat}
\end{subequations}

\begin{theorem}\label{regcurl}
Assume that $\bff \in \bL^2(\Om)$ and that $\Gamma$ is $C^{1,1}$, or $\Omega$ is a
convex Lipschitz polyhedron. Then, there exists a unique 
 solution of \eqref{weak1eps}. In addition, it satisfies
\begin{equation}\label{213}
\sqrt{\eps} \|\curl \bu_{\eps}\|_{\Om}+ \|\sqrt{\sigma}\,\bu_{\eps}\|_{\Om}+ \|p_{\eps}\|_{\Om} \le C \|\bff\|_{\Omega}.
\end{equation}
Moreover, suppose that $\bff \in \bH^1(\Om), \bbeta\in
\boldsymbol{W}^{1,\infty}(\Omega)$, $\sigma \in W^{1,\infty}(\Omega)$ and  $\Gamma$ is $C^{3}$.  If $\Omega$ is convex, let  $\mathcal{C}= \|\nabla \bbeta\|_{L^\infty(\Omega)}$, or otherwise $\mathcal{C}= K \|\nabla \bbeta\|_{\infty,\Omega}$  where $K$ is from \eqref{h1}. Then, assuming $\sigma_0^{} > \mathcal{C}$ we have
\begin{equation*}
\| \curl \bu_{\eps}\|_{\Om} \le  C\,\| \bff\|_{\curl,\Om}\,,
\end{equation*} 
where $C>0$ depends on $\sigma, \bbeta$, and $K$, but not on negative powers
of $\eps$.
\end{theorem}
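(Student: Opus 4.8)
The plan is to treat the two assertions separately. For well-posedness and the bound \eqref{213}, I would argue exactly as in Lemma~\ref{Lem2.1} via the Babuska--Brezzi theory: the form $a(\bu,\bv):=\eps(\curl\bu,\curl\bv)-(\bu,\bbeta\cdot\nabla\bv)+(\sigma\bu,\bv)$ is bounded on $\bV\times\bV$, and on the divergence-free subspace the skew-symmetric convection term drops out, so $a(\bv,\bv)=\eps\|\curl\bv\|_{\Om}^2+\|\sqrt\sigma\,\bv\|_{\Om}^2$, which controls the $\bH^1$-norm there by \eqref{h1} (with an $\eps$-dependent constant, harmless for fixed $\eps$); the divergence constraint satisfies the inf--sup condition by Lemma~\ref{infsup}. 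Testing \eqref{weak1eps1} with $\bv=\bu_{\eps}$ and using $\dive\bbeta=0$, $\bbeta\cdot\bn=0$ (so $(\bu_{\eps},\bbeta\cdot\nabla\bu_{\eps})=\tfrac12\int_\Om\bbeta\cdot\nabla|\bu_{\eps}|^2=0$) gives $\eps\|\curl\bu_{\eps}\|_{\Om}^2+\|\sqrt\sigma\,\bu_{\eps}\|_{\Om}^2=(\bff,\bu_{\eps})$, hence the bounds on $\sqrt\eps\|\curl\bu_{\eps}\|_{\Om}$ and $\|\sqrt\sigma\,\bu_{\eps}\|_{\Om}$ after a Young inequality; the pressure bound then follows by choosing $\bv\in\bH_0^1(\Om)\subset\bV$ with $\dive\bv=p_{\eps}$ in Lemma~\ref{infsup}, exactly as in Lemma~\ref{Lem2.1}.

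For the $\eps$-uniform estimate on $\curl\bu_{\eps}$ I would work with the vorticity $\bw:=\curl\bu_{\eps}$ (I present the three-dimensional case, the planar one being analogous). Since $\dive(\bu_{\eps}\otimes\bbeta)=\bbeta\cdot\nabla\bu_{\eps}$, applying $\curl$ to the momentum equation in \eqref{pdeeps2} eliminates the pressure and, using Lemma~\ref{derivative-identity} and $\curl(\sigma\bu_{\eps})=\sigma\,\bw+\nabla\sigma\times\bu_{\eps}$, yields
\[
\eps\,\curl\curl\bw+\bbeta\cdot\nabla\bw+\sigma\bw=\curl\bff-\bigl((\nabla\bbeta)^t\times\nabla\bu_{\eps}\bigr)-\nabla\sigma\times\bu_{\eps}.
\]

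I would then test this identity with $\bw$ and integrate over $\Om$. The transport term vanishes, $(\bbeta\cdot\nabla\bw,\bw)=\tfrac12\int_\Om\bbeta\cdot\nabla|\bw|^2=0$, again by $\dive\bbeta=0$ and $\bbeta\cdot\bn=0$. The viscous term gives $\eps\|\curl\bw\|_{\Om}^2$ plus the boundary contribution $\eps\int_\Gamma(\curl\bw\times\bn)\cdot\bw$, which vanishes because the natural boundary condition $\curl\bu_{\eps}\times\bn=0$ forces $\bw$ to be parallel to $\bn$ on $\Gamma$, whence $(\curl\bw\times\bn)\cdot\bw=(\bw\cdot\bn)\,(\curl\bw\times\bn)\cdot\bn=0$ (a cross product being orthogonal to $\bn$). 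Thus the left-hand side is bounded below by $\|\sqrt\sigma\,\bw\|_{\Om}^2\ge\sigma_0^{}\|\bw\|_{\Om}^2$, while on the right $(\curl\bff,\bw)\le\|\bff\|_{\curl,\Om}\|\bw\|_{\Om}$, $(\nabla\sigma\times\bu_{\eps},\bw)\le\|\nabla\sigma\|_{\infty,\Om}\|\bu_{\eps}\|_{\Om}\|\bw\|_{\Om}$, and the crucial term is controlled by \eqref{h1} (or \eqref{h1convex} in the convex case), since $\dive\bu_{\eps}=0$ and $\bu_{\eps}\cdot\bn=0$ give $\|\nabla\bu_{\eps}\|_{\Om}\le K\|\bw\|_{\Om}$:
\[
\bigl|\bigl((\nabla\bbeta)^t\times\nabla\bu_{\eps},\,\bw\bigr)\bigr|\le\|\nabla\bbeta\|_{\infty,\Om}\,\|\nabla\bu_{\eps}\|_{\Om}\,\|\bw\|_{\Om}\le\mathcal{C}\,\|\bw\|_{\Om}^2.
\]
Because $\sigma_0^{}>\mathcal{C}$ this term is absorbed, leaving $(\sigma_0^{}-\mathcal{C})\|\bw\|_{\Om}\le\|\bff\|_{\curl,\Om}+\|\nabla\sigma\|_{\infty,\Om}\|\bu_{\eps}\|_{\Om}$, and combining with $\|\bu_{\eps}\|_{\Om}\le C\|\bff\|_{\Om}$ from \eqref{213} gives $\|\curl\bu_{\eps}\|_{\Om}\le C\|\bff\|_{\curl,\Om}$ with $C$ independent of $\eps$.

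The main obstacle is rigour, not the formal computation: the steps above require $\bu_{\eps}$ to be smooth enough that $\curl\curl\bu_{\eps}\in\bL^2(\Om)$, that the traces of $\bw$ and $\curl\bw$ exist, and that the natural condition $\curl\bu_{\eps}\times\bn=0$ be meaningful. Establishing this qualitative $\bH^2$-regularity of $\bu_{\eps}$ (equivalently $\bw\in\bH^1(\Om)$) is where the stronger hypotheses $\bff\in\bH^1(\Om)$, $\bbeta\in\boldsymbol{W}^{1,\infty}(\Om)$, $\sigma\in W^{1,\infty}(\Om)$ and $\Gamma\in C^3$ enter, through elliptic regularity for the Hodge--Oseen system \eqref{pdeeps2}, whose principal part $\eps\,\curl\curl$ is elliptic on divergence-free fields obeying these boundary conditions. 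I would establish this first, allowing the regularity constant to depend on $\eps$ (it is only used qualitatively, to license the manipulations), and only afterwards run the quantitative energy estimate above; the whole point of the vorticity argument is that this final bound does not inherit the $\eps$-dependence of the regularity constant, i.e.\ it involves no negative powers of $\eps$.
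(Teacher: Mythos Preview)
Your proposal is correct and follows essentially the same route as the paper. The only cosmetic difference is that the paper tests the weak formulation \eqref{weak1eps1} directly with $\tilde\bv=\curl\curl\bu_\eps$ (after checking that $\curl\bu_\eps\times\bn=0$ forces $\curl\curl\bu_\eps\cdot\bn=0$, so $\tilde\bv\in\bV$), whereas you take $\curl$ of the strong equation and test with $\bw=\curl\bu_\eps$; after integration by parts these are the same identity, and both require the same $H^3$ regularity of $\bu_\eps$ (which the paper cites from \cite{sil2017regularity}) to justify the manipulations.
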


\begin{proof}
The existence and uniqueness of this solution follows from the Babuska-Brezzi theory \cite{BBF13} by noting 
that as proven in \cite{girault2012finite}, the norm in $\bH^1(\Omega)$ is equivalent to the one in 
$\bH(\curl,\Omega) \cap \bH_0(\dive,\Omega)$, thanks to the hypotheses on $\Gamma$.  The bound \eqref{213}  follows taking $\bv=\bu_\eps$ in \eqref{weak1eps1}, and  the inf-sup conditions
provides the stability for $p_{\eps}$. 

Next, whenever we suppose that $\Gamma$ is of class $C^3$ and $\bff\in \bH^1(\Omega)$,
using the results in \cite{sil2017regularity} (see Thereom 12 and
Remark 16) we have the regularity $\bu_{\eps} \in \bH^3(\Omega)$ and
$p_{\eps} \in H^2(\Omega)$. Noting that $\curl \bu_{\eps} \times \bn =
0$ on $\Gamma$ it follows that $\curl \curl (\bu_{\eps}) \cdot \bn = 0$, so, 
$\tilde{\bv}:= \curl \curl (\bu_{\eps}) \in \bH^1(\Omega) \cap \bH_0(\dive,\Omega)$, and then it is a valid test function
to be used in \eqref{weak1eps}. Thus, taking $\tilde{\bv}$ as test function in \eqref{weak1eps} and integrating
by parts we obtain
\begin{equation}\label{uniq-1}
\eps \|\curl \curl \bu_{\eps}\|_{\Om}^2 -(\bu_{\eps}, \bbeta \cdot \nabla (\curl \curl \bu_{\eps}))+ 
 \|\sqrt{\sigma}\,\curl \bu_{\eps}\|_{\Om}^2  + (\nabla\sigma\times \bu_\eps, \curl\bu_\eps) =(\curl \bff, \curl \bu_{\eps})\,.
\end{equation}
The second term in the left can be written as 
\begin{equation*}
 -(\bu_{\eps}, \bbeta \cdot \nabla (\curl \curl \bu_{\eps}))=(\bbeta \cdot \nabla\bu_{\eps}, \curl \curl \bu_{\eps})=(\curl (\bbeta \cdot \nabla \bu_{\eps}), \curl \bu_{\eps})\,.
\end{equation*}
However,  using Lemma~\ref{derivative-identity} and the antisymmetry of the convective term
\begin{equation*}
(\curl (\bbeta \cdot \nabla \bu_{\eps}), \curl \bu_{\eps})= (\bbeta \cdot \nabla (\curl \bu_{\eps}), \curl \bu_{\eps})+ ((\nabla \bbeta)^t \times \nabla  \bu_{\eps}, \curl \bu_{\eps})= ((\nabla \bbeta)^t \times \nabla  \bu_{\eps}, \curl \bu_{\eps})\,,
\end{equation*}
and then
\begin{equation*}
-(\bu_{\eps}, \bbeta \cdot \nabla (\curl \curl \bu_{\eps}))=((\nabla \bbeta)^t \times \nabla  \bu_{\eps}, \curl \bu_{\eps}).
\end{equation*}
Therefore, replacing the last identity in \eqref{uniq-1} we have 
\begin{equation*}
\eps \|\curl \curl \bu_{\eps}\|_{\Om}^2 + \|\sqrt{\sigma}\,\curl \bu_{\eps}\|_{\Om}^2=
(\curl \bff-\nabla\sigma\times \bu_\eps, \curl\bu_\eps)-((\nabla \bbeta)^t \times \nabla  \bu_{\eps}, \curl \bu_{\eps}).
\end{equation*}
Using the Cauchy Schwarz inequality, one of the inequalities \eqref{h1} or \eqref{h1convex}, and the fact that $\dive \bu_{\eps}=0$ we have
\begin{equation*}
\|\sqrt{\sigma}\,\curl \bu_{\eps}\|_{\Om}^2 \le  \| \curl \bff-\nabla\sigma\times \bu_\eps \|_{\Omega}\|\curl \bu_{\eps}\|_{\Omega} +  \mathcal{C}  \|\curl \bu_{\eps}\|_{\Omega}^2\,.
\end{equation*}
Hence,
\begin{equation*}
(\sigma_0- \mathcal{C})\,\|\curl \bu_{\eps}\|_{\Om}^2  \le  
 \|\curl \bff-\nabla\sigma\times \bu_\eps \|_{\Omega} \|\curl \bu_{\eps}\|_{\Omega}\,,
\end{equation*}
and the proof follows dividing by $\|\curl \bu_{\eps}\|_{\Om}$ and applying \eqref{213}.
\end{proof}

\begin{theorem}\label{uniqueness}
Let us assume the hypotheses Theorem \ref{regcurl} . Then, there exists a unique 
solution of \eqref{weak1} such that $\bu \in \bH^1(\Om) \cap \bH_0(\dive,\Omega)$ and $p \in H^1(\Omega)$.
\end{theorem}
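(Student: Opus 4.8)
The plan is to realise the solution as a weak limit of the Hodge--Oseen regularisations $(\bu_\eps,p_\eps)$ of \eqref{weak1eps}, using the $\eps$-uniform bounds of Theorem~\ref{regcurl}; the improved regularity furnished there is exactly what upgrades the limit from $\bL^2(\Om)$ to $\bH^1(\Om)$. The first step is an $\eps$-uniform bound in $\bH^1(\Om)$. Since $\dive\bu_\eps=0$ and $\bu_\eps\cdot\bn=0$ on $\Gamma$, the regularity estimate \eqref{h1} (or \eqref{h1convex} in the convex case) gives $\|\nabla\bu_\eps\|_\Om\le K\|\curl\bu_\eps\|_\Om$. Combining this with the improved curl bound $\|\curl\bu_\eps\|_\Om\le C\|\bff\|_{\curl,\Om}$ of Theorem~\ref{regcurl}, which holds precisely under $\sigma_0^{}>\mathcal{C}$, yields $\|\bu_\eps\|_{1,\Om}\le C\|\bff\|_{\curl,\Om}$, together with $\|p_\eps\|_\Om\le C\|\bff\|_\Om$ from \eqref{213}.

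Next I would extract a subsequence with $\bu_\eps\rightharpoonup\bu$ weakly in $\bH^1(\Om)$ and $p_\eps\rightharpoonup p$ weakly in $L_0^2(\Om)$. As $\bH_0(\dive,\Om)$ is a closed subspace, the limit satisfies $\bu\in\bH^1(\Om)\cap\bH_0(\dive,\Om)$, and, arguing as in the existence theorem, $\dive\bu=0$. Passing to the limit in \eqref{weak1eps1} is then routine because the problem is linear: the regularising term obeys $\eps(\curl\bu_\eps,\curl\bv)\le \eps\,C\|\bff\|_{\curl,\Om}\|\curl\bv\|_\Om\to0$ for each fixed $\bv\in\bV$, while the remaining terms are linear in $(\bu_\eps,p_\eps)$ and pass to the limit by weak convergence, so $(\bu,p)$ solves \eqref{weak1}. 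To obtain $p\in H^1(\Om)$, I restrict the test functions to $\bH_0^1(\Om)\subset\bV$, integrate the convection term by parts using $\dive\bbeta=0$, and thereby rewrite \eqref{weak1_1} as $\nabla p=\bff-\bbeta\cdot\nabla\bu-\sigma\bu$ in the distributional sense; since $\bu\in\bH^1(\Om)$ and $\bbeta\in\bW^{1,\infty}(\Om)$, the right-hand side lies in $\bL^2(\Om)$, whence $p\in H^1(\Om)$.

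For uniqueness I would take the difference $(\bw,r)$ of two solutions in $(\bH^1(\Om)\cap\bH_0(\dive,\Om))\times H^1(\Om)$. From \eqref{weak1_2} one has $\dive\bw=0$, so testing the homogeneous version of \eqref{weak1_1} with $\bv=\bw\in\bV$ annihilates the pressure term, and the antisymmetry of the convection term, namely $(\bw,\bbeta\cdot\nabla\bw)=\tfrac12\int_\Om\bbeta\cdot\nabla|\bw|^2=0$ (valid because $\dive\bbeta=0$ and $\bbeta\cdot\bn=0$), leaves $\|\sqrt{\sigma}\,\bw\|_\Om^2=0$, so $\bw=0$. Finally $r=0$ follows by choosing $\bv\in\bH_0^1(\Om)$ with $\dive\bv=r$ via Lemma~\ref{infsup} (legitimate since $r\in L_0^2(\Om)$), which gives $\|r\|_\Om^2=0$.

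The main obstacle is entirely contained in the inherited estimate: everything hinges on having a bound for $\|\curl\bu_\eps\|_\Om$ that is uniform in $\eps$, rather than merely a bound for $\sqrt{\eps}\,\|\curl\bu_\eps\|_\Om$, and this is exactly the content of Theorem~\ref{regcurl} together with the coercivity-type restriction $\sigma_0^{}>\mathcal{C}$ on the data. Once that uniform control is in hand, both the passage to the limit and the uniqueness argument are elementary consequences of linearity, the antisymmetry of the transport term, and the inf-sup stability of the divergence.
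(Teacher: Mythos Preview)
Your proposal is correct and follows essentially the same route as the paper: obtain $\eps$-uniform $\bH^1$ bounds on $\bu_\eps$ from the curl estimate of Theorem~\ref{regcurl} together with \eqref{h1}/\eqref{h1convex}, extract weak limits, pass to the limit in \eqref{weak1eps} using $\eps\|\curl\bu_\eps\|_\Om\to0$, recover $p\in H^1(\Om)$ from the strong form of the momentum equation, and prove uniqueness by testing with $\bv=\bw$ and exploiting the antisymmetry of the transport term. The only cosmetic difference is that the paper invokes the compact embedding $\bH^1(\Om)\hookrightarrow\bL^2(\Om)$ to get strong $\bL^2$ convergence of $\bu_\eps$, whereas you rely solely on weak convergence; since all terms are linear in $(\bu_\eps,p_\eps)$ for fixed $\bv$, either suffices.
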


\begin{proof}
Let $(\bu_{\eps}, p_{\eps})$ be the solution of \eqref{weak1eps}. Then, by Theorem~\ref{regcurl}  $\{(\bu_{\eps},p_{\eps})\}$ is uniformly bounded in $\bH^1(\Om)\times L^2_0(\Omega)$.  Hence, there exists 
a subsequence such that $\bu_{\eps}  \rightharpoonup \bu \in \bH^1(\Omega)$ and
$p_{\eps}  \rightharpoonup p \in L_0^2(\Omega)$ weakly. Moreover, since $\dive \bu_{\eps}=0$ and 
$\bu_{\eps} \in \bH_0(\dive,\Omega)$, then $\dive \bu=0$ and $\bu \in \bH_0(\dive,\Omega)$.
This proves that $\bu$ satisfies the second equation in \eqref{weak1} and the boundary conditions.
Since $\bu_{\eps}  \rightharpoonup \bu \in \bH^1(\Omega)$ weakly, then $\bu_{\eps}  \to \bu$
strongly in $\bL^2(\Omega)$.  In addition, since $\eps \|\curl \bu_{\eps}\|_{\Om}\to 0$ as
$\eps\to 0$, then using the weak convergence of $p_\eps$ to $p$
in $L^2(\Omega)$ we can take the limit as $\eps\to 0$ in \eqref{weak1eps} and conclude that
$(\bu, p)$ also satisfies the first equation in \eqref{weak1}. Finally, from the first equation in \eqref{weak1} we have
$\nabla p =\bff -\sigma\,\bu - \dive(\bbeta\otimes\bu)\in \bL^2(\Omega)$, and then $p\in H^1(\Omega)$.

To prove uniqueness, assume that $\bff=\boldsymbol{0}$. If we test with  $\bu \in \bH^1(\Omega) \cap \bH_0(\dive,\Omega)$ we immediately get that $\|\sqrt{\sigma}\bu\|_{\Om}^2=0$ which gives that $\bu=\boldsymbol{0}$. It easily follows that $p=0$.
\end{proof}

We finish this section by stating the following result that, in essence, casts the problem \eqref{weak1} as the limit of the Oseen problem \eqref{weak1eps-Oseen}.
\begin{corollary} Under the same hypotheses from Theorem \ref{uniqueness} the solution $(\bu,p)$ of \eqref{weak1} is the limit of the solutions of the Oseen problem \eqref{weak1eps-Oseen} in the following sense
\begin{equation}\label{limit}
\lim_{\eps\to 0}\Big( \|\bu-\bu_\eps\|_{\dive,\Omega}+\|p_\eps-p\|_\Omega\Big) = 0\,.
\end{equation}
\end{corollary}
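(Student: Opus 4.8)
The plan is to combine the uniform a~priori bounds of Lemma~\ref{Lem2.1} with an exact energy identity for the difference $\bu-\bu_\eps$, which will simultaneously upgrade weak convergence to strong convergence and identify the limit with the unique smooth solution $(\bu,p)$ of Theorem~\ref{uniqueness}. Since $\dive\bu_\eps=0=\dive\bu$, the $\bH(\dive,\Omega)$-norm in \eqref{limit} collapses to the $\bL^2(\Omega)$-norm, so it suffices to prove $\|\bu-\bu_\eps\|_\Omega\to0$ and $\|p_\eps-p\|_\Omega\to0$. First I would collect the ingredients. By Lemma~\ref{Lem2.1} the family $\{(\bu_\eps,p_\eps)\}$ is uniformly bounded in $\bH_0(\dive,\Omega)\times L^2_0(\Omega)$ and, crucially, $\eps\|\nabla\bu_\eps\|_\Omega\le\sqrt\eps\,(\sqrt\eps\|\nabla\bu_\eps\|_\Omega)\le C\sqrt\eps\,\|\bff\|_\Omega\to0$. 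Extracting a subsequence gives $\bu_\eps\rightharpoonup\bu^{*}$, $p_\eps\rightharpoonup p^{*}$, and exactly as in the existence theorem the pair $(\bu^{*},p^{*})$ solves \eqref{weak1}. Under the hypotheses of Theorem~\ref{uniqueness} the smooth solution satisfies $\bu\in\bH^1(\Omega)\cap\bH_0(\dive,\Omega)$, $p\in H^1(\Omega)$, so \eqref{weak1_1} holds in strong form, $\bbeta\cdot\nabla\bu+\sigma\bu+\nabla p=\bff$ a.e.\ in $\Omega$.

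The heart of the argument is an energy identity for $\be_\eps:=\bu-\bu_\eps$. Testing \eqref{weak1eps1-0} with $\bu_\eps\in\bH_0^1(\Omega)$ and using the antisymmetry of the convection term (valid because $\dive\bbeta=0$ and $\bbeta\cdot\bn=0$ on $\Gamma$) together with $\dive\bu_\eps=0$ gives $\eps\|\nabla\bu_\eps\|_\Omega^2+\|\sqrt\sigma\,\bu_\eps\|_\Omega^2=(\bff,\bu_\eps)$. Pairing the strong form of \eqref{weak1_1} with $\bu_\eps$ and with $\be_\eps$, and again invoking antisymmetry and $\dive\be_\eps=0$, I can eliminate all cross terms to obtain the exact identity
\[
\|\sqrt\sigma\,\be_\eps\|_\Omega^2+\eps\|\nabla\bu_\eps\|_\Omega^2=(\sigma\bu-\bbeta\cdot\nabla\bu,\be_\eps).
\]
Letting $\eps\to0$ along the subsequence, the right-hand side converges to $(\sigma\bu-\bbeta\cdot\nabla\bu,\bz)$ with $\bz:=\bu-\bu^{*}$. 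Subtracting the two copies of \eqref{weak1} satisfied by $(\bu,p)$ and $(\bu^{*},p^{*})$, testing with the admissible function $\bu$, and using the strong form once more yields $2(\sigma\bu,\bz)=(\bff,\bz)$; since $\bz$ is divergence free with $\bz\cdot\bn=0$ this gives $(\sigma\bu-\bbeta\cdot\nabla\bu,\bz)=2(\sigma\bu,\bz)-(\bff,\bz)=0$. Hence the right-hand side tends to $0$, the left-hand side (equal to it) tends to $0$, and so $\|\sqrt\sigma\,\be_\eps\|_\Omega\to0$. As $\sigma\ge\sigma_0>0$ this is strong $\bL^2(\Omega)$-convergence $\bu_\eps\to\bu$, and with $\dive\be_\eps=0$ it is convergence in $\bH(\dive,\Omega)$. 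Because the target $\bu$ is the same fixed smooth solution for every subsequence, the whole family converges.

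For the pressure I would subtract \eqref{weak1_1} (valid for $\bv\in\bH_0^1(\Omega)$) from \eqref{weak1eps1-0} to obtain, for all $\bv\in\bH_0^1(\Omega)$,
\[
(p_\eps-p,\dive\bv)=\eps(\nabla\bu_\eps,\nabla\bv)-(\bu_\eps-\bu,\bbeta\cdot\nabla\bv)+(\sigma(\bu_\eps-\bu),\bv).
\]
Given $p_\eps-p\in L^2_0(\Omega)$, Lemma~\ref{infsup} supplies $\bv\in\bH_0^1(\Omega)$ with $\dive\bv=p_\eps-p$ and $\|\nabla\bv\|_\Omega\le C\|p_\eps-p\|_\Omega$. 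Substituting this $\bv$, bounding the three terms by Cauchy--Schwarz (using $\|\bbeta\|_{\infty,\Omega}$, $\|\sigma\|_{\infty,\Omega}$ and the Poincaré inequality for the reaction term), and dividing by $\|p_\eps-p\|_\Omega$ gives $\|p_\eps-p\|_\Omega\le C(\eps\|\nabla\bu_\eps\|_\Omega+\|\bu_\eps-\bu\|_\Omega)$, which tends to $0$ by the previous step.

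I expect the main obstacle to be the identification of the weak limit with the smooth solution: the $\eps$-Laplacian regularisation \eqref{pdeeps} delivers only uniform $\bH(\dive,\Omega)$ bounds, not $\bH^1(\Omega)$ bounds, so the uniqueness of Theorem~\ref{uniqueness} cannot be applied directly to $\bu^{*}$. The energy identity above is designed precisely to circumvent this difficulty, since it \emph{forces} strong convergence to $\bu$ (whence $\bu^{*}=\bu$) rather than presupposing it; once the velocity convergence is in hand the pressure estimate is routine.
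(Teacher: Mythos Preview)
Your argument is correct. The pressure step is essentially the paper's own argument, but for the velocity you take a genuinely different route. The paper simply subtracts \eqref{weak1_1} from \eqref{weak1eps1-0} to obtain the error equation
\[
(\sigma(\bu-\bu_\eps),\bv)-\eps(\nabla\bu_\eps,\nabla\bv)+(\bbeta\otimes(\bu-\bu_\eps),\nabla\bv)-(p-p_\eps,\dive\bv)=0,
\]
and then tests directly with $\bv=\bu-\bu_\eps$, which yields $\|\sqrt\sigma(\bu-\bu_\eps)\|_\Omega^2+\eps\|\nabla(\bu-\bu_\eps)\|_\Omega^2=\eps(\nabla\bu,\nabla(\bu-\bu_\eps))\le\sqrt\eps\,\|\nabla\bu\|_\Omega\cdot\sqrt\eps\,\|\nabla(\bu-\bu_\eps)\|_\Omega$, hence an explicit rate $O(\sqrt\eps)$. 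No subsequence extraction is needed.

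By contrast, you never test \eqref{weak1eps1-0} with anything outside $\bH_0^1(\Omega)$: you test Oseen only with $\bu_\eps$, combine with the strong form of the limit equation to obtain $\|\sqrt\sigma\,\be_\eps\|_\Omega^2+\eps\|\nabla\bu_\eps\|_\Omega^2=(\sigma\bu-\bbeta\cdot\nabla\bu,\be_\eps)$, and then use weak compactness together with the algebraic identity $(\sigma\bu-\bbeta\cdot\nabla\bu,\bu-\bu^*)=0$ (derived from the difference of two weak solutions tested with $\bu$) to show the right-hand side tends to zero. This is more circuitous and loses the quantitative rate, but it has the advantage of avoiding the step where the paper tests the Oseen weak form with $\bu-\bu_\eps\notin\bH_0^1(\Omega)$; that step implicitly relies on extending \eqref{weak1eps1-0} to test functions in $\bH^1(\Omega)\cap\bH_0(\dive,\Omega)$, which in turn requires elliptic regularity for $\bu_\eps$ and care with the boundary term $\eps\int_\Gamma(\partial_n\bu_\eps)\cdot\bv$ coming from the Laplacian. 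Your detour through weak limits sidesteps this entirely.
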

\begin{proof}
The error $(\bu-\bu_\eps,p-p_\eps)$ satisfies the following error equation
\begin{equation}\label{error-1}
(\sigma(\bu-\bu_\eps),\bv)-\eps(\nabla\bu_\eps,\nabla\bv)+(\bbeta\otimes(\bu-\bu_\eps),\nabla\bv) -(p-p_\eps,\dive\bv) = 0\,,
\end{equation}
for all $\bv\in \bH^1(\Omega)\cap \bH_0(\dive,\Omega)$. Since $\bu\in \bH^1(\Omega)\cap \bH_0(\dive,\Omega)$,  $\hat{\bv}:=\bu-\bu_\eps$ is a valid test function for \eqref{error-1}. So, using $\hat{\bv}$
in \eqref{error-1}, 
the fact that both $\bu$ and $\bu_\eps$ are divergence-free, 
the Cauchy-Schwarz inequality, and \eqref{bound-Oseen} we get
\begin{align}
\|\sqrt{\sigma}(\bu-\bu_\eps)\|^2_\Omega +\eps\|\nabla(\bu-\bu_\eps)\|^2_\Omega &= \eps(\nabla\bu,\nabla(\bu-\bu_\eps)) \nonumber\\
&\le 
\sqrt{\eps}\|\nabla\bu\|_\Omega\,\sqrt{\eps}\|\nabla(\bu-\bu_\eps)\|_\Omega\to 0\,,
\end{align}
as $\eps\to 0$, which proves the convergence of $\bu_\eps$ to $\bu$ in $\bL^2(\Omega)$. The convergence of $\bu_\eps$ to $\bu$ in $\bH_0(\dive,\Omega)$
follows from the fact that both $\bu_\eps$ and $\bu$ are divergence-free.
\\ { } \\
To prove the convergence of the pressure, using Lemma~\ref{infsup} there exists $\bw\in \bH^1_0(\Omega)$ such that $|\bw|_{1,\Omega}\le C\|p-p_\eps\|_\Omega$ and $\dive\bw =p-p_\eps$.
Then, using \eqref{error-1}, the Cauchy-Schwarz inequality, and the convergence of $\bu_\eps$ to $\bu$,
\begin{align}
\|p-p_\eps\|^2_\Omega &= (p-p_\eps,\dive\bw) = (\sigma(\bu-\bu_\eps),\bw)+\eps(\nabla\bu_\eps,\nabla \bw) + (\bbeta\otimes (\bu_\eps-\bu),\nabla\bw) \nonumber\\
&\le C\,\Big(\|\sqrt{\sigma}\|_{\infty,\Omega}\,\|\sqrt{\sigma}\,(\bu-\bu_\eps)\|_\Omega+\sqrt{\eps}\,\sqrt{\eps}\|\nabla\bu_\eps\|_\Omega+\|\bbeta\|_{\infty,\Omega}\|\bu-\bu_\eps\|_\Omega\Big)\,\|p-p_\eps\|_\Omega\,,
\end{align}
and the proof follows by dividing by $\|p-p_\eps\|_\Omega$ and noticing that, thanks to
\eqref{bound-Oseen} the term within parentheses tends to zero as $\eps\to 0$.
\end{proof}

\section{Upwind H(div) method}\label{sec:upw}
\subsection{Preliminaries}
We denote by $\{ \mct\}_{h>0}^{}$ a family of shape-regular simplicial  triangulations of 
$\Omega$. The elements of $\mct$ are denoted by $T$, with diameter $h_T$, and $h:=\max\{h_T:T\in\mct\}$.
The set of its facets (edges for $d=2$, faces for $d=3$) is denoted by $\mathcal{E}_h$.
To cater for the nonconforming
character of the approximation we also introduce the following broken
versions of the scalar product
\begin{alignat*}{1}
(\bv, \bw)_h=&\sum_{T \in \mct} \int_T \bv \cdot \bw \, dx, \\
\langle \bv, \bw \rangle_h=&\sum_{T \in \mct} \int_{\partial T} \bv \cdot \bw \, ds.  
\end{alignat*}
In addition, we introduce the broken space $H(\mct)$, of functions in $L^2(\Omega)$ whose restriction to every $T\in\mct$ belongs to $H(T)$.

Let $T \in \mct$ and let $\bx \in \partial T$ then we define
\begin{equation*}
\bv_{\bbeta}^\pm(\bx)=\lim_{\epsilon \rightarrow 0} \bv\big(\bx \pm \epsilon (\bbeta(\bx) \cdot \bn(\bx)) \bn(\bx)\big). 
\end{equation*}
and
\begin{equation*}
\hat{\bv}(\bx)= \, \bv_{\bbeta}^-(\bx) 
\end{equation*}
For $F \in \mathcal{E}_h$ and $F= \partial T_1 \cap \partial T_2$ for $T_1, T_2, \in \mct$ we define the jumps
\begin{equation*}
\jump{\bv \otimes \bn}|_{F}= \bv|_{T_1} \otimes \bn_1+ \bv|_{T_2} \otimes \bn_2\,,
\end{equation*}
and for $F \in \mathcal{E}_h$ and $F \subset \Gamma$ we define 
\begin{equation*}
\jump{\bv \otimes \bn}|_{F}= \bv \otimes \bn.
\end{equation*}

We then define the semi-norm on the jumps of the solution over element
boundaries to be
\begin{equation*}
|\bv|_{\bbeta}^2=\sum_{F \in \mathcal{E}_h} \|\sqrt{|\bbeta \cdot \bn|} \jump{\bv \otimes \bn}\|_{0,F}^2.
\end{equation*}
With these definitions we can state the following important identity
\cite[Lemma 6.1]{GRW05}
\begin{proposition}\label{proposition1}
For all $\bv\in \bH^1(\mct)$, the following holds
\begin{equation}\label{aux1}
(\bv \otimes \bbeta, \nabla \bv)_h-\langle \bbeta \cdot \bn \hat{\bv}, \bv \rangle_h= -\frac{1}{2}  |\bv|_{\bbeta}^2.
\end{equation}
\end{proposition}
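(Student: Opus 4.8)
The plan is to turn the volume term into a sum of facet integrals by elementwise integration by parts, and then to carry out a facet-by-facet comparison against the upwind flux term, in which the only subtlety is the sign-dependent definition of the upwind trace $\hat\bv$. First I would note that the integrand of the volume term is a perfect gradient: since $\bv\otimes\bbeta=\bv\bbeta^t$ has entries $v_i\beta_j$ and $\nabla\bv$ has entries $\partial_j v_i$, their Frobenius product is
\begin{equation*}
(\bv\otimes\bbeta):\nabla\bv=\sum_i v_i\,(\bbeta\cdot\nabla v_i)=\bbeta\cdot\nabla\big(\tfrac12|\bv|^2\big).
\end{equation*}
Integrating by parts on each $T\in\mct$ and using $\dive\bbeta=0$ (together with $\tfrac12|\bv|^2=\tfrac12\,\bv\cdot\bv$) yields
\begin{equation*}
(\bv\otimes\bbeta,\nabla\bv)_h=\tfrac12\,\big\langle(\bbeta\cdot\bn)\,\bv,\bv\big\rangle_h,
\end{equation*}
so that the left-hand side of \eqref{aux1} equals $\tfrac12\langle(\bbeta\cdot\bn)\bv,\bv\rangle_h-\langle(\bbeta\cdot\bn)\hat\bv,\bv\rangle_h$.

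Next I would evaluate this combination pointwise on each interior facet $F=\partial T_1\cap\partial T_2$, collecting the contributions coming from $\partial T_1$ and $\partial T_2$ in the sum over elements. Writing $\bn_1$, $\bn_2=-\bn_1$ for the two outward normals, $\bv_1,\bv_2$ for the corresponding interior traces, and $b:=\bbeta\cdot\bn_1$, the two contributions combine to
\begin{equation*}
\tfrac12\,b\big(|\bv_1|^2-|\bv_2|^2\big)-b\,\hat\bv\cdot(\bv_1-\bv_2).
\end{equation*}
From the definition $\hat\bv=\bv_{\bbeta}^-$ the displacement $-\epsilon(\bbeta\cdot\bn)\bn$ points into the upwind element, so $\hat\bv=\bv_1$ when $b>0$ and $\hat\bv=\bv_2$ when $b<0$; in particular $\hat\bv$ is single-valued on $F$. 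Substituting either choice, a short computation collapses the display above to the single expression $-\tfrac12|b|\,|\bv_1-\bv_2|^2$ in both cases.

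It then remains to recognise this as the facet contribution to $-\tfrac12|\bv|_{\bbeta}^2$. On $F$ one has $\jump{\bv\otimes\bn}=(\bv_1-\bv_2)\otimes\bn_1$, whose Frobenius norm squared is $|\bv_1-\bv_2|^2$ because $|\bn_1|=1$; hence $\|\sqrt{|\bbeta\cdot\bn|}\,\jump{\bv\otimes\bn}\|_{0,F}^2=\int_F|\bbeta\cdot\bn|\,|\bv_1-\bv_2|^2\,ds$, matching the integral of the expression above. For a boundary facet $F\subset\Gamma$ both sides vanish identically, since the standing assumption $\bbeta\cdot\bn=0$ on $\Gamma$ kills the weight $|\bbeta\cdot\bn|$ as well as the flux factor $\bbeta\cdot\bn$. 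Summing over all facets then gives \eqref{aux1}. The only real work is the bookkeeping in the middle step: correctly identifying the upwind trace through the sign of $b$ while keeping track of the opposite normals $\bn_2=-\bn_1$; once the cases $b>0$ and $b<0$ are seen to give the same answer, the rest is routine.
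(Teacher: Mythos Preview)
Your argument is correct and is precisely the standard derivation of this identity: rewrite the volume integrand as $\bbeta\cdot\nabla(\tfrac12|\bv|^2)$, integrate by parts elementwise using $\dive\bbeta=0$, and then do the facet-by-facet bookkeeping with the sign of $\bbeta\cdot\bn$ to collapse the upwind flux against the symmetric term. The paper does not actually give its own proof of this proposition; it simply cites \cite[Lemma~6.1]{GRW05}, where exactly this computation is carried out. So there is nothing to compare against beyond noting that your write-up supplies the details the paper omits.
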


Let us define the Raviart-Thomas \cite{RT} and BDM spaces \cite{BDM}. The space of
polynomials of degree at most $k$ defined in $T$ is denoted by $\pol_k(T)$, and we denote
$\bpol_k(T)=[\pol_k(T)]^d$. For every $T\in \mct$, let $\text{RT}_k(T)= \bpol_k(T)+
(\pol_k(T) \setminus \pol_{k-1}(T)) \bx$. We define, for $k\ge 0$, the spaces
\begin{alignat*}{1}
\bV_{h,k}^{\text{RT}}=&\{ \bv \in \bH_0(\dive , \Omega):  \bv|_T \in \text{RT}_k(T) \text{ for all } T \in \mct \}, \\
\bV_{h,k}^{\text{BDM}}=&\{ \bv \in \bH_0(\dive ,  \Omega):  \bv|_T \in \bpol_k(T) \text{ for all } T \in \mct\}, \\
M_{h,k}=&\{ q \in L_0^2(\Omega):  q|_T \in \pol_k(T) \text{ for all } T \in \mct\}.
\end{alignat*}

A well-known property linking these two spaces is stated now (for a
proof see \cite[Lemma 4.3]{CG04}).

\begin{lemma}\label{auxlemma}
Let  $\bv \in    \bV_{h,k}^{\text{RT}}$ with $\dive \bv=0$ on $\Omega$ then  $\bv \in    \bV_{h,k}^{\text{BDM}}$.
\end{lemma}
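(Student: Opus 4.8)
The plan is to exploit the specific structure of the local Raviart--Thomas space together with Euler's identity for homogeneous polynomials. Recall that, by definition, $\text{RT}_k(T)=\bpol_k(T)+\widetilde{\pol}_k(T)\,\bx$, where $\widetilde{\pol}_k(T)=\pol_k(T)\setminus\pol_{k-1}(T)$ denotes the homogeneous polynomials of degree exactly $k$. Since any nonzero element of $\widetilde{\pol}_k(T)\,\bx$ has a top homogeneous part of degree $k+1$, it cannot belong to $\bpol_k(T)$; hence the sum is direct and every $\bv\in\text{RT}_k(T)$ admits the unique decomposition
\begin{equation*}
\bv|_T=\bp+q\,\bx,\qquad \bp\in\bpol_k(T),\ q\in\widetilde{\pol}_k(T).
\end{equation*}

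Next I would compute the divergence of this decomposition on a fixed element $T$. Using $\dive\bx=d$ together with Euler's identity $\bx\cdot\nabla q=k\,q$, valid for $q$ homogeneous of degree $k$, one obtains
\begin{equation*}
\dive(q\,\bx)=q\,\dive\bx+\bx\cdot\nabla q=(d+k)\,q,
\end{equation*}
and therefore $\dive(\bv|_T)=\dive\bp+(d+k)\,q$. The crucial observation is a degree count: $\dive\bp$ is a polynomial of degree at most $k-1$, whereas $(d+k)\,q$ is homogeneous of degree exactly $k$. Consequently the homogeneous component of degree $k$ of $\dive(\bv|_T)$ is precisely $(d+k)\,q$.

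Now I would invoke the hypothesis. Since $\dive\bv=0$ on $\Omega$, and $\bv|_T$ is polynomial, we have $\dive(\bv|_T)\equiv 0$ as a polynomial on each $T\in\mct$. Extracting its homogeneous part of degree $k$ gives $(d+k)\,q=0$, and since $d+k>0$ this forces $q=0$. Hence $\bv|_T=\bp\in\bpol_k(T)$ for every $T\in\mct$. As $\bv\in\bH_0(\dive,\Omega)$ by assumption, this is exactly the statement that $\bv\in\bV_{h,k}^{\text{BDM}}$.

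I do not anticipate a genuine obstacle here: the argument is a short algebraic computation once the local Raviart--Thomas space is written as the direct sum above. The only point requiring a little care is the correct reading of the factor $\pol_k(T)\setminus\pol_{k-1}(T)$ as the homogeneous polynomials of degree $k$, so that Euler's identity applies and cleanly isolates the degree-$k$ part that is annihilated by the divergence-free constraint.
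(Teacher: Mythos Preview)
Your argument is correct and is in fact the standard proof of this well-known fact. The paper does not supply its own proof of this lemma; it simply refers to \cite[Lemma~4.3]{CG04}, so there is nothing to compare beyond noting that your Euler-identity argument is exactly the classical one underlying that reference. Your final caveat is well placed: the paper's notation $\pol_k(T)\setminus\pol_{k-1}(T)$ is not literally the space of homogeneous polynomials of degree $k$ (it is not even a vector space), but since any $q$ of exact degree $k$ splits as a homogeneous top part plus a lower-order remainder and the latter contributes $q_{\text{low}}\bx\in\bpol_k(T)$, the resulting sum coincides with $\bpol_k(T)\oplus\widetilde{\pol}_k(T)\,\bx$, which is what your proof uses.
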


We next introduce the standard
$L^2$-projection on polynomials on an element $T$, $P_k^T:L^2(T)
\rightarrow \pol_{k}(T)$. Its global equivalent will be denoted $P_k:L^2(\Omega)
\rightarrow M_{h,k}$. We recall the standard estimates for the
$L^2$-projection (see, e.g., \cite{EG04})
\begin{align}
\|P_k q - q \|_\Omega+ h \|\nabla (P_k q - q )\|_\Omega \le C \, h_T^{k+1} |q|_{k+1,\Omega}\,, \label{eq:L2approx}\\
\|q -P_0 q\|_{\infty,T} \le C \, h_T \| q\|_{1,\infty,T}\,.\label{eq:Linftyapprox}
\end{align}

The Raviart-Thomas interpolation operator will be used in the sequel. It is defined as follows: $\bPi:  \bH^1(\Omega) \cap \bH_0(\text{div}; \Omega) \rightarrow \bV_{h,k}^{\text{RT}}$ 
where $\bPi \bv$ is the only function of $\bV_{h,k}^{RT}$ satisfying
\begin{align}
\int_T (\bPi \bv- \bv) \cdot \bw\,dx=&\,0 \quad \text{ for all }  \bw \in \bpol_{k-1}(T), \textrm{and all}\; T \in \mct,  \label{RT-1a}\\
\int_F (\bPi \bv -\bv) \cdot \bn w \, ds=&\,0 \quad \text{ for all } w \in \pol_{k}(F),  \textrm{and all}\;  F \in \mathcal{E}_h.\label{RT-1b}
\end{align}
This operator satisfies the following classical properties (see, e.g., \cite{BBF13}).
\begin{lemma}\label{errorproj}
Let $k\ge 0$. The mapping $\Pi$ satisfies the following commutative property
\begin{equation}\label{commutative}
\dive \bPi \bv= P_k \dive \bv\,.
\end{equation}
Let $\bv \in \bH^{k+1}(\Omega)$ then we have
\begin{equation*}
\|\bPi \bv-\bv\|_{T}+ h_T \|\nabla (\bPi \bv-\bv)\|_{T} \le C \, h_T^{k+1} |\bv|_{k+1,T} \quad \text {for all } T \in \mct. 
\end{equation*}
\end{lemma}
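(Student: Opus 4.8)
The plan is to establish the two assertions separately: the commuting property \eqref{commutative} follows from an integration-by-parts argument that directly exploits the degrees of freedom \eqref{RT-1a}--\eqref{RT-1b}, while the approximation bound follows from a standard scaling argument on a reference element combined with the Bramble--Hilbert lemma.

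For the commuting property, first observe that since $\bPi\bv|_T \in \text{RT}_k(T)$ one has $\dive\bPi\bv|_T \in \pol_k(T)$, so it is enough to show that $\int_T(\dive\bPi\bv - \dive\bv)\,q\,dx = 0$ for every $q \in \pol_k(T)$. Integrating by parts on $T$ gives
\begin{equation*}
\int_T \dive(\bPi\bv - \bv)\,q\,dx = -\int_T (\bPi\bv - \bv)\cdot\nabla q\,dx + \int_{\partial T}(\bPi\bv - \bv)\cdot\bn\,q\,ds.
\end{equation*}
Since $\nabla q \in \bpol_{k-1}(T)$, the volume term vanishes by \eqref{RT-1a}; and since $q|_F \in \pol_k(F)$ on each facet $F$ of $T$, the boundary term vanishes by \eqref{RT-1b}. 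Hence $\dive\bPi\bv = P_k\dive\bv$ elementwise, which is \eqref{commutative}.

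For the approximation estimate I would pass to a fixed reference simplex $\hat T$ and relate $\bv$ on $T$ to $\hat\bv$ on $\hat T$ through the contravariant Piola transform, under which the Raviart--Thomas functionals \eqref{RT-1a}--\eqref{RT-1b} are preserved, so that the interpolant commutes with the transform, $\widehat{\bPi\bv} = \hat\bPi\hat\bv$. On $\hat T$ the operator $\hat\bPi$ is bounded from $\bH^{k+1}(\hat T)$ into $\bH^1(\hat T)$, since the facet functionals are well defined on this space by the trace theorem, and it reproduces $\bpol_k(\hat T)$ because $\bpol_k(\hat T)\subset\text{RT}_k(\hat T)$. Writing $\hat\bPi\hat\bv - \hat\bv = (\hat\bPi - \mathrm{Id})(\hat\bv - \hat\bp)$ for an arbitrary $\hat\bp\in\bpol_k(\hat T)$ and invoking the Bramble--Hilbert lemma yields
\begin{equation*}
\|\hat\bPi\hat\bv - \hat\bv\|_{\hat T} + \|\nabla(\hat\bPi\hat\bv - \hat\bv)\|_{\hat T} \le C\,|\hat\bv|_{k+1,\hat T}.
\end{equation*}
Transforming back to $T$ and using the standard scaling bounds for the Piola map then recovers the claimed estimate, with a constant depending only on $k$, $d$, and the shape-regularity of $\mct$.

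The first, algebraic step is routine. The main point requiring care is the scaling argument, since the contravariant Piola transform scales the function, its gradient, and the Jacobian determinant by different powers of $h_T$; these must be tracked together with the shape-regularity constant in order to recover precisely the power $h_T^{k+1}$. This is entirely classical and may be quoted from \cite{BBF13}.
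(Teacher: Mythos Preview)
Your proposal is correct and follows the standard textbook route. Note, however, that the paper does not actually prove this lemma: it is stated as a classical property and simply attributed to \cite{BBF13}. Your argument---integration by parts against the degrees of freedom \eqref{RT-1a}--\eqref{RT-1b} for the commuting identity, followed by a Piola pull-back to a reference element, polynomial reproduction, Bramble--Hilbert, and scaling for the approximation bound---is precisely the proof one finds in that reference, so there is nothing to contrast.
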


We end this section recalling the following classical inverse and local trace inequalities that hold for every $T\in\mct$
\begin{align}
|v_h^{}|_{1,T}^{} &\le Ch^{-1}\|v_h^{}\|_{T}^{}\qquad\forall\, v_h^{}\in \pol_k(T)\,, \label{inverse}\\
\|v\|_{\partial T} &\le C\big( h_T^{-\frac{1}{2}}\|v\|_{T}+h_T^{\frac{1}{2}}|v|_{1,T}\big)\qquad\forall\, v\in H^1(T)\,. \label{local-trace}
\end{align}

\subsection{The finite element method and the error estimates for the velocity}

Throughout, the velocity and pressure will be approximated using the spaces $\bV_h$ and $M_h$, respectively. In this work we will consider the following
choices:
\begin{equation*}
 \bV_h=\bV_{h,k}^{\text{RT}} \quad \text{ and } M_h=M_{h,k},  \text{ for  } k \ge 0,
\end{equation*}
or 
\begin{equation*}
 \bV_h=\bV_{h,k}^{\text{BDM}} \quad \text{ and } M_h=M_{h,k-1}, \text{ for } k \ge 1.
\end{equation*}

The numerical method analysed here reads: Find $\bu \in \bV_h$ and $p_h \in M_h$ such that
\begin{subequations}\label{fem}
\begin{alignat}{2}
-(\bu_h, \bbeta \cdot \nabla \bv_h)_h+ \langle (\bbeta \cdot \bn) \widehat{\bu_h}, \bv_h  \rangle_h+(\sigma \bu_h, \bv_h) -(p_h, \dive \bv_h )=&\, (\bff,\bv_h) \quad && \text{ for all } \bv_h \in \bV_h,\\
 (\dive \bu_h, q_h)=&\, 0 \quad && \text{ for all } q_h \in M_h. 
\end{alignat}
\end{subequations}
Thanks to the inf-sup stability of the pair $\bV_h\times M_h$ (see \cite{BBF13}), and Proposition~\ref{proposition1}, problem \eqref{fem} has a unique solution. Moreover, the
method~\eqref{fem}  is consistent; in fact,  for $(\bu,p) \in \bH^1(\Omega) \times L^2_0(\Omega)$ solving \eqref{pde} we have
\begin{subequations}\label{weak}
\begin{alignat}{2}
-(\bu, \bbeta \cdot \nabla \bv_h)_h+ \langle (\bbeta \cdot \bn) \bu, \bv_h  \rangle_h+(\sigma \bu, \bv_h) -(p, \dive \bv_h )=&\, (\bff,\bv_h) \quad && \text{ for all } \bv_h \in \bV_h,\\
 (\dive \bu, q_h)=&\, 0 \quad && \text{ for all } q_h \in M_h. 
\end{alignat}
\end{subequations}
A consequence of Lemma \ref{auxlemma} is that the finite element method
\eqref{fem} produces the same velocity approximation for  $\bu_h \in
\bV_{h,k}^{\text{RT}}$ and  $\bu_h \in    \bV_{h,k}^{\text{BDM}}$. We
show that in the following proposition.
\begin{proposition}\label{eq:RTeqBDM}
Let $(\bu_h,p_h)$ be the solution of \eqref{fem}
for the spaces $\bV_h\times M_h =\bV_{h,k}^{\text{RT}} \times M_{h,k}$
and $(\tilde \bu_h, \tilde p_h)$ the solution of \eqref{fem}
for the spaces $\bV_h\times M_h =\bV_{h,k}^{\text{BDM}} \times
M_{h,k-1}$. Then $\bu_h=\tilde \bu_h $.
\end{proposition}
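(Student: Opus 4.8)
The plan is to show that both discrete velocities are exactly (not merely weakly) divergence-free, to deduce from Lemma~\ref{auxlemma} that the RT velocity in fact lives in the BDM space, and then to test the difference of the two momentum equations against $\bu_h-\tilde\bu_h$ itself, so that the pressure term drops out entirely and coercivity forces the difference to vanish. The crucial gain is that, once both velocities are pointwise solenoidal, the mismatch between the two pressure spaces $M_{h,k}$ and $M_{h,k-1}$ never has to be reconciled.

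First I would establish that $\dive\bu_h=0$ and $\dive\tilde\bu_h=0$ as functions in $L^2(\Omega)$. For the RT pair, note that $\dive\bu_h|_T\in\pol_k(T)$ for every $T\in\mct$, and that $\int_\Omega\dive\bu_h=0$ because $\bu_h\in\bH_0(\dive,\Omega)$; hence $\dive\bu_h\in M_{h,k}$, and choosing $q_h=\dive\bu_h$ in the second equation of \eqref{fem} gives $\|\dive\bu_h\|_\Omega^2=0$. The same argument applies to the BDM pair: here $\dive\tilde\bu_h|_T\in\pol_{k-1}(T)$, so $\dive\tilde\bu_h\in M_{h,k-1}$, and testing with $q_h=\dive\tilde\bu_h$ yields $\dive\tilde\bu_h=0$. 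Since $\bu_h\in\bV_{h,k}^{\text{RT}}$ is now divergence-free, Lemma~\ref{auxlemma} shows $\bu_h\in\bV_{h,k}^{\text{BDM}}$; as $\tilde\bu_h\in\bV_{h,k}^{\text{BDM}}$ by construction, the difference $\bv_h:=\bu_h-\tilde\bu_h$ also belongs to $\bV_{h,k}^{\text{BDM}}$.

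Because $\bpol_k(T)\subset\text{RT}_k(T)$ we have $\bV_{h,k}^{\text{BDM}}\subset\bV_{h,k}^{\text{RT}}$, so every BDM function is an admissible test function in the RT momentum equation. Restricting the first equation of \eqref{fem} for both pairs to test functions $\bw_h\in\bV_{h,k}^{\text{BDM}}$ and subtracting, the right-hand sides cancel, and using linearity of the upwind value $\widehat{(\cdot)}$ I obtain
\begin{equation*}
-(\bv_h,\bbeta\cdot\nabla\bw_h)_h+\langle(\bbeta\cdot\bn)\widehat{\bv_h},\bw_h\rangle_h+(\sigma\bv_h,\bw_h)-(p_h-\tilde p_h,\dive\bw_h)=0
\end{equation*}
for all $\bw_h\in\bV_{h,k}^{\text{BDM}}$.

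Finally I would take $\bw_h=\bv_h$. The pressure term $(p_h-\tilde p_h,\dive\bv_h)$ vanishes since $\dive\bv_h=\dive\bu_h-\dive\tilde\bu_h=0$; the two convective contributions combine, via the identity \eqref{aux1} of Proposition~\ref{proposition1} applied to $\bv_h$, into $\tfrac12|\bv_h|_{\bbeta}^2$; and the reaction term gives $\|\sqrt{\sigma}\,\bv_h\|_\Omega^2$. This leaves
\begin{equation*}
\|\sqrt{\sigma}\,\bv_h\|_\Omega^2+\tfrac12|\bv_h|_{\bbeta}^2=0,
\end{equation*}
and since $\sigma\ge\sigma_0>0$ we conclude $\bv_h=0$, i.e. $\bu_h=\tilde\bu_h$. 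The only step requiring genuine care is the first one: the exact divergence-free property hinges on the inclusions $\dive\,\text{RT}_k(T)\subset\pol_k(T)$ and $\dive\,\bpol_k(T)\subset\pol_{k-1}(T)$ together with the global zero-mean condition, which is precisely what places $\dive\bu_h$ and $\dive\tilde\bu_h$ into their respective pressure spaces. Once that is secured the pressure decouples and the remainder is a routine coercivity argument.
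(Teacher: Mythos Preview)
Your proof is correct and follows essentially the same route as the paper's: both arguments use the exact divergence-free property of the two discrete velocities, invoke Lemma~\ref{auxlemma} to place the difference in $\bV_{h,k}^{\text{BDM}}$, test the subtracted momentum equation with this difference so that the pressure term drops out, and conclude via the coercivity identity of Proposition~\ref{proposition1}. If anything, you are more explicit than the paper about why $\dive\bu_h=0$ and $\dive\tilde\bu_h=0$ hold pointwise and about the inclusion $\bV_{h,k}^{\text{BDM}}\subset\bV_{h,k}^{\text{RT}}$ needed to restrict the RT momentum equation to BDM test functions.
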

\begin{proof}
Let $\be_h := \tilde \bu_h - \bu_h$, $\eta_h = \tilde p_h - p_h$ then using \eqref{fem} we see that
\begin{equation}\label{eq:peRTBDM}
-(\be_h, \bbeta \cdot \nabla \bv_h)_h+ \langle (\bbeta \cdot \bn) \widehat{\be_h}, \bv_h  \rangle_h+(\sigma \be_h, \bv_h) -(\eta_h, \dive \bv_h )=0 \quad \text{ for all } \bv_h \in \bV_{h,k}^{\text{BDM}}.
\end{equation}
Since $\dive \be_h = 0$ by Lemma \ref{auxlemma}there holds $\be_h \in
\bV_{h,k}^{\text{BDM}}$, which is a valid test function. Taking $\bv_h =
\be_h$ in \eqref{eq:peRTBDM} and applying Proposition
\ref{proposition1} we obtain
\[
\|\sqrt{\sigma} \be_h\|_\Omega = 0,
\]
which proves the claim.
\end{proof}
We can now derive an error estimate for the velocity.  We let
$\be_h=\bPi \bu-\bu_h$ and start by noticing that
\begin{equation}\label{divfree}
\dive \be_h=0.
\end{equation}
Hence, by Lemma \ref{auxlemma} we have $\be_h \in \bV_{h,k}^{\text{BDM}}$ and in particular 
\begin{equation}\label{aux2}
\nabla \be_h|_{T} \in [{\pol}_{k-1}(T)]^{d \times d} \qquad \text{  for all }  T \in \mct.
 \end{equation}

\begin{theorem}\label{thm:main}
Let $\bu \in [H^1(\Omega)]^d$ solve \eqref{pde} and let $\bu_h \in \bV_h$ solve \eqref{fem}.
Then, the following error estimate holds
\begin{alignat*}{1}
\|\sqrt{\sigma} (\bu-\bu_h)\|_\Omega &+ |\bu-\bu_h|_{\bbeta} \le\,    C \left(1+\frac{\| \bbeta\|_{1,\infty,T}}{\sigma_0}\right)  \|\sqrt{\sigma} (\bu-\bPi \bu)\|_\Omega \\
& + C \|\bbeta\|_{\infty,\Omega}^{1/2} \left( \sum_{T\in \mct} \left(\frac{1}{h_T} \| \bu-\bPi \bu\|_{T}^2+ h_T \| \nabla(\bu-\bPi \bu)\|_{T}^2\right) \right)^{\frac{1}{2}}\,,
\end{alignat*}
where the constant $C$ does not depend on $h$, or any physical parameter of the equation.
\end{theorem}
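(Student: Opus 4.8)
The plan is to bound $\be_h:=\bPi\bu-\bu_h$ in the energy norm $\|\sqrt\sigma\,\cdot\|_\Omega+|\,\cdot\,|_\bbeta$ and then recover the estimate for $\bu-\bu_h$ by the triangle inequality, writing $\bu-\bu_h=(\bu-\bPi\bu)+\be_h$. First I would subtract the method \eqref{fem} from the consistency identity \eqref{weak}; since $\bu\in\bH^1(\Omega)$ is single-valued across facets we have $\widehat{\bu}=\bu$, so the convective fluxes combine to give the error equation
\begin{equation*}
-(\bu-\bu_h, \bbeta\cdot\nabla\bv_h)_h + \langle(\bbeta\cdot\bn)\widehat{(\bu-\bu_h)}, \bv_h\rangle_h + (\sigma(\bu-\bu_h),\bv_h) - (p-p_h,\dive\bv_h) = 0
\end{equation*}
for all $\bv_h\in\bV_h$. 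The test function is $\bv_h=\be_h$: this is admissible, and by \eqref{divfree} we have $\dive\be_h=0$, so the pressure term drops out, while Lemma~\ref{auxlemma} and \eqref{aux2} give $\nabla\be_h|_T\in[\pol_{k-1}(T)]^{d\times d}$, a fact that will be essential for the volume term.

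Substituting $\bu-\bu_h=(\bu-\bPi\bu)+\be_h$ and moving the interpolation-error contributions to the right, the convective and upwind terms acting on $\be_h$ are handled by Proposition~\ref{proposition1} (identity \eqref{aux1}), which turns them into $\tfrac12|\be_h|_\bbeta^2$, yielding
\begin{equation*}
\tfrac12|\be_h|_\bbeta^2 + \|\sqrt\sigma\,\be_h\|_\Omega^2 = (\bu-\bPi\bu,\bbeta\cdot\nabla\be_h)_h - \langle(\bbeta\cdot\bn)\widehat{(\bu-\bPi\bu)},\be_h\rangle_h - (\sigma(\bu-\bPi\bu),\be_h).
\end{equation*}
The reaction term is bounded directly by Cauchy--Schwarz as $\|\sqrt\sigma(\bu-\bPi\bu)\|_\Omega\|\sqrt\sigma\,\be_h\|_\Omega$. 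For the volume convective term I would exploit the orthogonality \eqref{RT-1a} of the Raviart--Thomas interpolant: since $\bu-\bPi\bu$ is $L^2(T)$-orthogonal to $\bpol_{k-1}(T)$ and $P_0\bbeta\cdot\nabla\be_h\in\bpol_{k-1}(T)$ (because $\nabla\be_h\in\pol_{k-1}$ and $P_0\bbeta$ is constant on $T$), one may replace $\bbeta$ by $\bbeta-P_0\bbeta$. Then \eqref{eq:Linftyapprox} gives $\|\bbeta-P_0\bbeta\|_{\infty,T}\le Ch_T\|\bbeta\|_{1,\infty,T}$ and the inverse inequality \eqref{inverse} gives $\|\nabla\be_h\|_T\le Ch_T^{-1}\|\be_h\|_T$, so the two powers of $h_T$ cancel and this term is bounded by $\frac{C\|\bbeta\|_{1,\infty,T}}{\sigma_0}\|\sqrt\sigma(\bu-\bPi\bu)\|_\Omega\|\sqrt\sigma\,\be_h\|_\Omega$.

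I expect the main obstacle to be the upwind flux term $\langle(\bbeta\cdot\bn)\widehat{(\bu-\bPi\bu)},\be_h\rangle_h$. I would reorganise this element-boundary sum into a sum over facets; using $\bn_2=-\bn_1$, the two contributions from an interior facet $F$ combine into $\int_F(\bbeta\cdot\bn_1)\,\widehat{(\bu-\bPi\bu)}\cdot(\be_h|_{T_1}-\be_h|_{T_2})\,ds$, and since $(\be_h|_{T_1}-\be_h|_{T_2})\otimes\bn_1=\jump{\be_h\otimes\bn}$ this is precisely where the seminorm $|\be_h|_\bbeta$ is generated. Splitting the weight $|\bbeta\cdot\bn|$ symmetrically and applying Cauchy--Schwarz over facets bounds this term by $\big(\sum_{F}\||\bbeta\cdot\bn|^{1/2}\widehat{(\bu-\bPi\bu)}\|_{0,F}^2\big)^{1/2}|\be_h|_\bbeta$, whose first factor is controlled by $\|\bbeta\|_{\infty,\Omega}^{1/2}$ times the element-boundary traces of $\bu-\bPi\bu$; the local trace inequality \eqref{local-trace} then converts these into $\big(\sum_T(h_T^{-1}\|\bu-\bPi\bu\|_T^2 + h_T\|\nabla(\bu-\bPi\bu)\|_T^2)\big)^{1/2}$, exactly the factor appearing in the statement. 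A Young inequality absorbs $\|\sqrt\sigma\,\be_h\|_\Omega$ and $|\be_h|_\bbeta$ into the left-hand side, and finally the triangle inequality, together with one more application of \eqref{local-trace} to bound $|\bu-\bPi\bu|_\bbeta$ (where continuity of $\bu$ means only the interpolation error contributes to the jumps), delivers the claimed estimate.
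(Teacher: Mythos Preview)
Your proposal is correct and follows essentially the same route as the paper: derive the error equation from consistency, test with $\be_h=\bPi\bu-\bu_h$ (pressure drops by \eqref{divfree}), use Proposition~\ref{proposition1} to obtain the key identity \eqref{uno}, then bound the three right-hand terms exactly as you describe---the RT orthogonality/\eqref{aux2} trick for the volume term, the facet rearrangement into jumps plus \eqref{local-trace} for the flux term, and Cauchy--Schwarz for the reaction term---before finishing with the triangle inequality.
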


\begin{proof} Using  \eqref{fem}, \eqref{weak}, \eqref{divfree}, and \eqref{aux1} we get
\begin{align*}
\|\sqrt{\sigma}\be_h\|_\Omega^2=&(\sigma (\bu-\bu_h), \be_h)+(\sigma (\bPi \bu-\bu), \be_h) \\
=& ((\bu-\bu_h),  \bbeta \cdot \nabla  \be_h)_h- \langle \bbeta \cdot \bn (\bu-\widehat{\bu_h}) ,   \be_h \rangle_h  +(\sigma (\bPi \bu-\bu), \be_h)   \\
=& ((\bPi \bu-\bu_h),  \bbeta \cdot \nabla \be_h)_h- \langle \bbeta \cdot \bn (\widehat{\bPi \bu}-\widehat{\bu_h}) ,  \be_h \rangle_h  \\
&+((\bu-\bPi \bu),  \bbeta \cdot \nabla \be_h)_h- \langle \bbeta \cdot \bn (\bu-\widehat{\bPi \bu}),  \be_h\rangle_h  +(\sigma (\bPi \bu-\bu), \be_h) \\
=&-\frac{1}{2} |\be_h|_{\bbeta}^2  +((\bu-\bPi \bu),  \bbeta \cdot \nabla \be_h)_h- \langle \bbeta \cdot \bn (\bu-\widehat{\bPi \bu}),  \be_h \rangle_h  +(\sigma (\bPi \bu-\bu), \be_h)\,.
\end{align*}
Hence, we have 
\begin{align}
&\|\sqrt{\sigma}\be_h \|_\Omega^2+\frac{1}{2} |\be_h|_{\bbeta}^2 \nonumber \\
&=(\bu-\bPi \bu,  \bbeta \cdot \nabla \be_h)_h- \langle \bbeta \cdot \bn (\bu-\widehat{\bPi \bu}),  \be_h \rangle_h +(\sigma (\bPi \bu-\bu), \be_h )\,. \label{uno}
\end{align}
We bound each term separately.  
Using \eqref{aux2}, the definition of $\bPi$ \eqref{RT-1a}-\eqref{RT-1b},   \eqref{eq:Linftyapprox}, and \eqref{inverse}, we have
\begin{equation} \label{dos}
(\bu-\bPi \bu,  \bbeta \cdot \nabla \be_h)_h= (\bu-\bPi \bu,  (\bbeta-P_0\bbeta) \cdot \nabla \be_h)_h \le C \| \bbeta\|_{1,\infty,\Omega} \|\bu-\bPi \bu\|_\Omega  \|\be_h\|_\Omega\,.
\end{equation}
Using the contributions from neighbouring elements on the face to express the discrete error on the faces in
terms of jumps, the normal continuity of $\bu$ and $\bPi\bu$, and using the local trace inequality \eqref{local-trace} it is easy to show that
\begin{equation} \label{tres}
- \langle \bbeta \cdot \bn (\bu-\widehat{\bPi \bu}),  \be_h \rangle_h  \le C \,\|\bbeta\|_{\infty,\Omega}^{\frac{1}{2}} |\be_h|_{\bbeta} \left( \sum_{T\in \mct} \left(\frac{1}{h_T} \| \bu-\bPi \bu\|_{T}^2+ h_T \| \nabla(\bu-\bPi \bu)\|_{T}^2\right) \right)^{\frac{1}{2}}.
\end{equation}
Finally, 
\begin{equation} \label{cuatro}
(\sigma (\bPi \bu-\bu), \be_h) \le \| \sqrt{\sigma}   (\bPi \bu-\bu)\|_\Omega \| \sqrt{\sigma} \be_h\|_\Omega.
\end{equation}
Therefore, inserting \eqref{dos}-\eqref{cuatro} into \eqref{uno} we arrive at
\begin{alignat*}{1}
\|\sqrt{\sigma} \be_h\|_\Omega+ |\be_h|_{\bbeta} \le &   C \,\Big(1+\frac{\| \bbeta\|_{1,\infty,\Omega}}{\sigma_0}\Big)  \|\sqrt{\sigma} (\bu-\bPi \bu)\|_\Omega \\
& + C \|\bbeta\|_{\infty,\Omega}^{\frac{1}{2}} \left( \sum_{T\in \mct} \left(\frac{1}{h_T} \| \bu-\bPi \bu\|_{T}^2+ h_T \| \nabla(\bu-\bPi \bu)\|_{T}^2\right) \right)^{\frac{1}{2}}.
\end{alignat*}
The result follows after applying the triangle inequality. 
\end{proof}

The following result appears as a corollary of the last theorem and  Lemma~\ref{errorproj}.

\begin{corollary}\label{cor:conv}
Let $\bu \in [H^{k+1}(\Omega)]^d$ solve \eqref{pde} and let $\bu_h \in \bV_h$ solve \eqref{fem}. Then, the following error estimate holds
\begin{alignat*}{1}
\|\sqrt{\sigma} (\bu-\bu_h)\|_\Omega+ |\bu-\bu_h|_{\bbeta} \le &   C \left(\left[1+\frac{\| \bbeta\|_{1,\infty, T}}{\sigma_0}\right] \|\sqrt{\sigma}\|_{\infty,\Omega} h^{\frac{1}{2}}+\|\bbeta\|_{\infty,\Omega}^{\frac{1}{2}}\right) h^{k+\frac{1}{2}} \|\bu\|_{k+1,\Omega)}.
\end{alignat*}
\end{corollary}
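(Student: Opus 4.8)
The plan is to take the estimate of Theorem~\ref{thm:main} as the starting point and to control the two quantities appearing on its right-hand side, namely $\|\sqrt{\sigma}(\bu-\bPi\bu)\|_\Omega$ and the weighted sum $\big(\sum_{T\in\mct}(h_T^{-1}\|\bu-\bPi\bu\|_T^2+h_T\|\nabla(\bu-\bPi\bu)\|_T^2)\big)^{1/2}$, by means of the elementwise interpolation bound in Lemma~\ref{errorproj}. Since $\bu\in[H^{k+1}(\Omega)]^d$, that lemma applies on every $T\in\mct$, and after summing the local estimates and using shape-regularity to replace each $h_T$ by the global mesh size $h$, both terms will be seen to scale like $h^{k+\frac12}$ (up to the reaction-norm weight), which is exactly what the corollary asserts.

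For the first term I would factor out the reaction coefficient, writing $\|\sqrt{\sigma}(\bu-\bPi\bu)\|_\Omega\le\|\sqrt{\sigma}\|_{\infty,\Omega}\|\bu-\bPi\bu\|_\Omega$, and then use the $L^2$ part of Lemma~\ref{errorproj} elementwise together with the summation $\sum_{T}h_T^{2(k+1)}|\bu|_{k+1,T}^2\le h^{2(k+1)}|\bu|_{k+1,\Omega}^2$ to obtain $\|\bu-\bPi\bu\|_\Omega\le C\,h^{k+1}|\bu|_{k+1,\Omega}$. Writing $h^{k+1}=h^{\frac12}\,h^{k+\frac12}$ then reproduces the first contribution in the stated bound, with the prefactor $(1+\|\bbeta\|_{1,\infty,T}/\sigma_0)\|\sqrt{\sigma}\|_{\infty,\Omega}h^{\frac12}$ inherited from Theorem~\ref{thm:main}.

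The only point that requires a little care is the weighted sum, and this is where the half-power of $h$ is produced. On each element the $L^2$ bound gives $h_T^{-1}\|\bu-\bPi\bu\|_T^2\le C\,h_T^{2k+1}|\bu|_{k+1,T}^2$, while the gradient bound $\|\nabla(\bu-\bPi\bu)\|_T\le C\,h_T^k|\bu|_{k+1,T}$ gives $h_T\|\nabla(\bu-\bPi\bu)\|_T^2\le C\,h_T^{2k+1}|\bu|_{k+1,T}^2$; thus the two contributions balance at the same power $h_T^{2k+1}$. Summing over $T$, bounding $h_T\le h$, and taking the square root yields $\big(\sum_{T}(\cdots)\big)^{1/2}\le C\,h^{k+\frac12}|\bu|_{k+1,\Omega}$, which multiplied by $C\|\bbeta\|_{\infty,\Omega}^{1/2}$ is the second contribution in the corollary. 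Finally, replacing the seminorm $|\bu|_{k+1,\Omega}$ by the full norm $\|\bu\|_{k+1,\Omega}$ and collecting the two contributions gives the claimed estimate. Since everything reduces to the approximation properties already recorded in Lemma~\ref{errorproj}, no genuine obstacle arises; the main thing to watch is the correct bookkeeping of the powers of $h$ and the use of shape-regularity when passing from local to global mesh sizes.
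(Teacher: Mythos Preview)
Your argument is correct and is precisely the approach the paper takes: the corollary is obtained directly from Theorem~\ref{thm:main} by inserting the elementwise interpolation bounds of Lemma~\ref{errorproj}, summing over the mesh, and writing $h^{k+1}=h^{1/2}h^{k+1/2}$ for the reaction term. The paper records this in a single sentence without spelling out the details you have supplied.
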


\begin{remark}
The arguments of Theorem \ref{thm:main} and Corollary \ref{cor:conv}
may be used to improve the order obtained Theorem 2.2 of \cite{GSS17}
to $O(h^{k+\frac12})$, if an upwind flux is used. Following the ideas
above, use integration by parts in 
the first term of $I_1$ in the equation after (2.12). Then add and
subtract the exact solution to the approximate solution in term $I_3$
and recombine terms, so that one may use continuity on the norm
augmented with $L^2$-control on the faces the jumps of the approximate
velocity.
\end{remark}

\subsection{$L^2$-error estimates for the pressure approximation}
Since the pressure space is of polynomial degree $k$ for the method
using the $RT$ space for velocity approximation and $k-1$ for the
method using the $BDM$ space, the optimal order that can be obtained
for the error of the pressure approximation in the $L^2$-norm is
$O(h^{k+1})$ and $O(h^k)$, respectively. Here we will prove the
following orders for the pressure error :
\begin{enumerate}
\item in the first case (RT), $O(h^{k+\frac12})$; this is,
 the same suboptimality of $O(h^{\frac12})$ as for the velocity
approximation.
\item in the second case (BDM) we get the optimal convergence $O(h^k)$; considering
that the pressure space is of degree $k-1$. For the discrete error,
i.e. the projection
  of the error on the space $M_h$, we get an $O(h^{k+\frac12})$
  estimate, this is a superconvergence of 
$O(h^{\frac12})$ compared with the approximation property of the space
of constant functions.
\end{enumerate}

\begin{theorem}\label{thm:pres}
Let $(\bu,p) \in \bH^1(\Omega)\times L^2_0(\Omega)$ solve
\eqref{pde} and let $(\bu_h,p_h) \in \bV_h \times M_h$ solve
\eqref{fem}. Let $\ell$ denote the polynomial degree of the space $M_h$. Then, the following error estimate holds
\begin{alignat*}{1}
\|P_\ell p - p_h\|_\Omega \leq\, &   C (\|\bbeta\|_{\infty,\Omega} \sigma_0^{-\frac12} + \sigma^{\frac12}) \|\sqrt{\sigma} (\bu-\bu_h)\|_\Omega \\
& +  C \,\|\bbeta\|_{\infty,\Omega} \left( \sum_{T\in \mct}  \left(\| \bu-\bPi \bu\|_{T}^2+ h_T^2 \| \nabla(\bu-\bPi \bu)\|_{T}^2\right) \right)^{\frac{1}{2}}\,.
\end{alignat*}
\end{theorem}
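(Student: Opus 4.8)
The plan is to bound $q_h := P_\ell p - p_h \in M_h$ by pairing it with the divergence of a judiciously built test velocity, exploiting that $\dive$ maps $\bV_h$ onto $M_h$. First I would use Lemma~\ref{infsup} to obtain $\bw \in \bH_0^1(\Omega)$ with $\dive \bw = q_h$ and $\|\nabla \bw\|_\Omega \le C\|q_h\|_\Omega$, and set $\bv_h := \bPi \bw \in \bV_h$ ($\bPi$ the Raviart--Thomas interpolant for the RT pair, and its BDM analogue for the BDM pair). The commuting property \eqref{commutative} then gives $\dive \bv_h = P_\ell \dive \bw = P_\ell q_h = q_h$. The two features of $\bv_h$ that drive everything are: (i) the $H^1$-stability of the canonical interpolant, $\|\bv_h\|_\Omega + \|\nabla \bv_h\|_h \le C\|\bw\|_{1,\Omega} \le C\|q_h\|_\Omega$; and (ii) a small tangential-jump bound, $\sum_{F \in \mce} h_F^{-1}\|\jump{\bv_h}\|_{0,F}^2 \le C\|q_h\|_\Omega^2$. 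Property (ii) holds because $\bw$ is globally continuous, so $\jump{\bv_h} = \jump{\bPi \bw - \bw}$ is a jump of the interpolation error of an $\bH^1$ function, for which the $h_F^{-1}$-weighted facet bound is standard (trace inequality \eqref{local-trace} plus first-order interpolation estimates).

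Next I would convert the norm into the error equation. Since $\dive \bv_h = q_h$ is elementwise in $\pol_\ell$, $L^2$-orthogonality of $P_\ell$ removes the projection, $(P_\ell p - p, \dive \bv_h) = 0$, so $\|q_h\|_\Omega^2 = (p - p_h, \dive \bv_h)$. Subtracting \eqref{fem} from the consistency identity \eqref{weak} and using $\dive \bv_h = q_h$ gives
\begin{equation*}
\|q_h\|_\Omega^2 = -(\bu - \bu_h, \bbeta \cdot \nabla \bv_h)_h + \langle (\bbeta \cdot \bn)(\bu - \widehat{\bu_h}), \bv_h\rangle_h + (\sigma(\bu - \bu_h), \bv_h).
\end{equation*}
The reaction term is immediate, $(\sigma(\bu - \bu_h), \bv_h) \le \|\sqrt{\sigma}\|_{\infty,\Omega}\,\|\sqrt{\sigma}(\bu - \bu_h)\|_\Omega \|\bv_h\|_\Omega$, giving the $\sigma^{1/2}$ contribution through (i). For the volume convective term I would deliberately \emph{not} integrate by parts (that would expose the uncontrolled elementwise $\nabla \bu_h$); keeping the derivative on $\bv_h$ and using (i) together with $\|\bu - \bu_h\|_\Omega \le \sigma_0^{-1/2}\|\sqrt{\sigma}(\bu - \bu_h)\|_\Omega$ yields $|(\bu - \bu_h, \bbeta \cdot \nabla \bv_h)_h| \le C\|\bbeta\|_{\infty,\Omega}\sigma_0^{-1/2}\|\sqrt{\sigma}(\bu - \bu_h)\|_\Omega\|q_h\|_\Omega$.

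The heart of the argument, and the step I expect to be the main obstacle, is the facet term $\langle (\bbeta \cdot \bn)(\bu - \widehat{\bu_h}), \bv_h\rangle_h$. Adding the two contributions meeting at each interior facet and using that $\bu$ is single-valued while $\bbeta \cdot \bn = 0$ on $\Gamma$ (so boundary facets drop out), I would recast it as $\sum_{F} \int_F |\bbeta\cdot\bn|\,(\bu - \bu_h|_{\text{up}})\cdot \jump{\bv_h}$, with $\bu_h|_{\text{up}}$ the upwind trace and $\jump{\bv_h}$ the purely tangential jump. The decisive manoeuvre is a \emph{facet-wise weighted} Cauchy--Schwarz, distributing $h_F^{1/2}$ onto the error trace and $h_F^{-1/2}$ onto the jump, so that after a global Cauchy--Schwarz the jump factor is controlled by (ii) (times $\|\bbeta\|_{\infty,\Omega}^{1/2}$), while the error factor becomes $\big(\sum_T h_T\,\||\bbeta\cdot\bn|^{1/2}(\bu - \bu_h|_{\text{up}})\|_{\partial T}^2\big)^{1/2}$. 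Splitting $\bu - \bu_h = (\bu - \bPi \bu) + \be_h$ with $\be_h = \bPi \bu - \bu_h$, the trace inequality \eqref{local-trace} bounds the interpolation part by $C\|\bbeta\|_{\infty,\Omega}^{1/2}\Theta$, where $\Theta := \big(\sum_{T\in\mct}(\|\bu - \bPi\bu\|_T^2 + h_T^2\|\nabla(\bu - \bPi\bu)\|_T^2)\big)^{1/2}$, while the inverse trace inequality handles the discrete part (legitimate since $\dive \be_h = 0$ forces $\be_h \in \bV_{h,k}^{\text{BDM}}$ by Lemma~\ref{auxlemma}, hence $\be_h$ is elementwise polynomial), giving $C\|\bbeta\|_{\infty,\Omega}^{1/2}\|\be_h\|_\Omega$ with $\|\be_h\|_\Omega \le \sigma_0^{-1/2}\|\sqrt{\sigma}(\bu - \bu_h)\|_\Omega + \Theta$. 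The $h_F$-weights from the two sides cancel, leaving $C\|\bbeta\|_{\infty,\Omega}\big(\Theta + \sigma_0^{-1/2}\|\sqrt{\sigma}(\bu - \bu_h)\|_\Omega\big)\|q_h\|_\Omega$. Collecting the reaction, volume and facet bounds and dividing by $\|q_h\|_\Omega$ produces the claimed estimate; the construction is uniform over the RT and BDM choices because, by Proposition~\ref{eq:RTeqBDM}, the velocity $\bu_h$ coincides, and only the degree $\ell$ of $M_h$ and the matching interpolant differ.
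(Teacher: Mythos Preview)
Your proposal is correct and follows essentially the same route as the paper: construct $\bw\in\bH^1_0(\Omega)$ with $\dive\bw=P_\ell p-p_h$, test the error equation with its interpolant, bound the reaction and volume convective terms via Cauchy--Schwarz and $H^1$-stability of the interpolant, and handle the facet term by exploiting the continuity of $\bw$ (the paper writes this as $\langle(\bbeta\cdot\bn)(\bu-\widehat{\bu_h}),\tilde\bPi\bv_p-\bv_p\rangle_h$, which is exactly your $\jump{\bv_h}=\jump{\bPi\bw-\bw}$ in jump notation). The only cosmetic difference is that in the BDM case the paper takes $\tilde\bPi\bv_p\in\bV_{h,k-1}^{\text{RT}}\subset\bV_{h,k}^{\text{BDM}}$ rather than the BDM interpolant, but either choice works.
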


\begin{proof}
Using the surjectivity of the divergence operator as a mapping from
$\bH^1_0(\Omega)$ to $L^2_0(\Omega)$ there exists $\bv_p
\in \bH^1_0(\Omega)$ such that $\dive \bv_p = P_\ell p - p_h$ and
\begin{equation}\label{aux567}
\|\bv_p\|_{1,\Omega} \leq C \|P_\ell p - p_h\|_\Omega.
\end{equation}
It follows  from \eqref{aux567} and \eqref{commutative} that
\[
\|P_\ell p - p_h\|_\Omega^2 = (P_\ell p - p_h,\dive\bv_p) =
(P_\ell p - p_h,\dive \tilde \bPi \bv_p)  =
(p - p_h,\dive \tilde \bPi \bv_p).
\]
If $\bV_h \equiv \bV_{h,k}^{\text{RT}}$ then choose $\tilde \bPi \bv_p \in
\bV_{h,k}^{\text{RT}}$
and if $\bV_h \equiv \bV_{h,k}^{\text{BDM}}$ choose $\tilde \bPi \bv_p \in
\bV_{h,{k-1}}^{\text{RT}} \subset \bV_{h,k}^{\text{BDM}}$.
Using \eqref{fem} and \eqref{weak} we find that
\begin{equation}
(p - p_h,\dive \tilde \bPi \bv_p) = -(\bu - \bu_h, \bbeta \cdot \nabla
\tilde \bPi \bv_p)_h+ \langle (\bbeta \cdot \bn) (\bu - \widehat{\bu_h} ), \tilde \bPi \bv_p
\rangle_h+(\sigma (\bu - \bu_h), \tilde \bPi \bv_p).
\end{equation}
Applying the Cauchy-Schwarz inequality and the stability of the RT
interpolant and of $\bv_p$ we have
\[
-(\bu - \bu_h, \bbeta \cdot \nabla
\tilde\bPi \bv_p)_h + (\sigma (\bu - \bu_h), \tilde \bPi \bv_p) \leq
(\|\bbeta\|_{\infty,\Omega} \sigma_0^{-\frac12} + \sigma^{\frac12})
\|\sqrt{\sigma} (\bu - \bu_h)\|_\Omega \| \bv_p\|_{1,\Omega}.
\]
For the remaining term observe that, by the definition of  $\langle \cdot, \cdot
\rangle_h$, the fact that $\bbeta \cdot \bn$ changes sign on
neighbouring elements and that $(\bu - \widehat{\bu_h} )$ is single
valued on the faces of the triangulation,
\[
\langle (\bbeta \cdot \bn) (\bu - \widehat{\bu_h} ), \tilde \bPi \bv_p
\rangle_h = \langle (\bbeta \cdot \bn) (\bu - \widehat{\bu_h}
), (\tilde \bPi \bv_p - \bv_p) 
\rangle_h.
\]
The right hand side of this equality is bounded using the Cauchy-Schwarz
inequality, the trace inequality \eqref{local-trace} and the
interpolation properties of the RT-interpolant of Lemma
\ref{errorproj} as follows
\begin{alignat*}{1}
& \langle (\bbeta \cdot \bn) (\bu - \widehat{\bu_h}
), (\tilde \bPi \bv_p - \bv_p) 
\rangle_h \\
& \leq   C \|\bbeta\|_{\infty,\Omega} \sum_{T \in\mathcal{T}_h} (h_T^{-\frac12} \|\bu -
\bu_h\|_{T}+ h_T^{\frac12} \|\nabla (\bu -
\bu_h)\|_{T}) h_T^{\frac12}
\|\bv_p\|_{1,T} \\
& \le  C \|\bbeta\|_{\infty,\Omega} \sum_{T \in\mathcal{T}_h} (\|\bu -
\bu_h\|_{T}+\|\bu -\bPi \bu\|_{T}+ h_T\|\nabla (\bu -
\bPi \bu)\|_{T})
\|\bv_p\|_{1,T},
\end{alignat*}
where in the last step we added and subtracted $\bPi \bu$, used
the triangle inequality and the inverse inequality \eqref{inverse}.
We conclude by using \eqref{aux567} .
\end{proof}

The following result is an immediate consequence of 
Theorem \ref{thm:pres} and Corollary \ref{cor:conv} and the
approximation properties of the $L^2$-projection,
\begin{corollary}\label{cor:pres_order}
Assume that $\bV_h=\bV_{h,k}^{\text{RT}}$ and $M_h=M_{h,k}$. Then, there exists 
$\tilde C_{\bbeta,\sigma}>0$ that depends only on the constants in the
bounds of Theorems \ref{thm:pres} and Corollary \ref{cor:conv} such that
\[
\|p - p_h\|_\Omega
\leq \tilde C_{\bbeta,\sigma} h^{k+\frac12} \|\bu\|_{k+1,\Omega}
+ C h^{k+1}|p|_{k+1,\Omega}\,.
\]
For the case in which  $\bV_h=\bV_{h,k}^{\text{BDM}}$ and 
$M_h=M_{h,k-1}$, the following error estimate holds
\[
\|P_{k-1} p - p_h\|_\Omega 
\leq \hat C_{\bbeta,\sigma} h^{k+\frac12} \|\bu\|_{k+1,\Omega} 
\]
and 
\[
\|p - p_h\|_\Omega \leq \hat C_{\bbeta,\sigma} h^{k+\frac12}
\|\bu\|_{k+1,\Omega} + C h^k |p|_{k,\Omega},
\]
where $\hat C_{\bbeta,\sigma}$ depends on the constants in the
bounds of Theorems \ref{thm:pres} and Corollary \ref{cor:conv}.
\end{corollary}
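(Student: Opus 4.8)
The plan is to treat this purely as an assembly of the three ingredients already in hand: the abstract pressure bound of Theorem~\ref{thm:pres}, the velocity convergence rate of Corollary~\ref{cor:conv}, and the interpolation/projection estimates of Lemma~\ref{errorproj} and \eqref{eq:L2approx}. The starting point is Theorem~\ref{thm:pres}, which controls the \emph{discrete} pressure error $\|P_\ell p - p_h\|_\Omega$ by two contributions: a velocity-error term weighted by $(\|\bbeta\|_{\infty,\Omega}\sigma_0^{-1/2}+\sigma^{1/2})$, and a purely interpolative term $\bigl(\sum_{T}(\|\bu-\bPi\bu\|_T^2+h_T^2\|\nabla(\bu-\bPi\bu)\|_T^2)\bigr)^{1/2}$. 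Crucially, the right-hand side of Theorem~\ref{thm:pres} does not depend on the polynomial degree $\ell$ of $M_h$; this is exactly what will produce the superconvergence in the BDM case, so I want to keep that observation front and centre.

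The two terms are then bounded separately. For the velocity-error term I would simply invoke Corollary~\ref{cor:conv} to get $\|\sqrt{\sigma}(\bu-\bu_h)\|_\Omega \le C\,h^{k+\frac12}\|\bu\|_{k+1,\Omega}$, so this contribution is of order $h^{k+\frac12}$. For the interpolative term I would apply the estimates of Lemma~\ref{errorproj}, namely $\|\bu-\bPi\bu\|_T \le C h_T^{k+1}|\bu|_{k+1,T}$ and $\|\nabla(\bu-\bPi\bu)\|_T \le C h_T^{k}|\bu|_{k+1,T}$; the point worth checking carefully is that \emph{both} summands scale like $h_T^{2(k+1)}|\bu|_{k+1,T}^2$, so after taking the square root this term is of order $h^{k+1}|\bu|_{k+1,\Omega}$, i.e. one full order higher than the velocity term and therefore absorbed by it. Combining the two bounds yields
\begin{equation*}
\|P_\ell p - p_h\|_\Omega \le \hat C\,h^{k+\frac12}\|\bu\|_{k+1,\Omega}
\end{equation*}
with $\hat C$ depending only on the constants of Theorem~\ref{thm:pres} and Corollary~\ref{cor:conv}. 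Since in the BDM case $\ell=k-1$, this is precisely the claimed discrete-error estimate, and it is a superconvergence result because the rate $h^{k+\frac12}$ far exceeds the approximation order $h^{k}$ of the degree-$(k-1)$ pressure space.

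Finally, to pass from the projected error to the genuine pressure error I would use the triangle inequality $\|p-p_h\|_\Omega \le \|p-P_\ell p\|_\Omega + \|P_\ell p - p_h\|_\Omega$, estimating the projection error by \eqref{eq:L2approx}. In the RT case $\ell=k$, so $\|p-P_k p\|_\Omega \le C h^{k+1}|p|_{k+1,\Omega}$ and the second term supplies the $h^{k+\frac12}\|\bu\|_{k+1,\Omega}$ contribution, giving the first displayed bound. In the BDM case $\ell=k-1$, so $\|p-P_{k-1}p\|_\Omega \le C h^{k}|p|_{k,\Omega}$, which combines with the $h^{k+\frac12}$ discrete bound to give the last displayed estimate. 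There is no real analytical obstacle here; the only step requiring genuine attention is the degree counting in the two cases and the verification that the gradient term in the interpolative sum does not degrade the rate (the extra $h_T^2$ factor exactly compensates the loss of one derivative in $\|\nabla(\bu-\bPi\bu)\|_T$), together with the bookkeeping that keeps the parameter dependence inside $\tilde C_{\bbeta,\sigma}$ and $\hat C_{\bbeta,\sigma}$.
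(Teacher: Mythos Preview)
Your proposal is correct and follows exactly the route the paper indicates: the paper states that the corollary is ``an immediate consequence of Theorem~\ref{thm:pres} and Corollary~\ref{cor:conv} and the approximation properties of the $L^2$-projection,'' and your argument is precisely that assembly, with the degree bookkeeping and the verification that the interpolative term is $O(h^{k+1})$ carried out explicitly.
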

\section{A numerical example}\label{sec:num}
Here we will show some illustrations of the theory developed above
using the analytical solution of example (2) in section \ref{sec:linear}. For
ample qualitative numerical evidence of the performance of this type of method on
physically relevant problems we refer to the references \cite{SL18,SLLL18}.

We consider the domain $\Omega=(0,1) \times (0,1)$ and the solution
\eqref{eq:vel_vortex}-\eqref{eq:press_vortex} of
example (2). We used the package FreeFEM++ \cite{He12} to implement the formulation \eqref{fem} with either the
BDM(1) element and piecewise constant pressures or the RT(1) element
with piecewise affine, discontinuous, pressures. The linear systems
were solved using UMFPACK and the meshes were of Union Jack type. In Tables
\ref{tab:BDM}-\ref{tab:RT} we report the errors of velocities and
pressures in the (relative) $L^2$-norm. We also report the CPU
time. We see that the velocity approximations have identical errors in
the two cases as predicted by Proposition \ref{eq:RTeqBDM}, whereas as expected the BDM(1) approximation has poorer
convergence of the pressure. The RT(1) computation however is more
costly by almost a factor three.

In Table \ref{tab:n} we report the variation of the error on a fixed
mesh with $h=1/40$ and $\sigma=100$. The variable $n$, controlling the
number of vortices, and hence influencing both
$\|\bbeta\|_{W^{1,\infty}(\Omega)}$ and $\|\bu\|_{H^2(\Omega)}$ is
taken in the set $n \in \{1,2,4,8\}$. We observe (approximately) linear growth in both
velocity and pressures, except for the pressure for the method using the RT
element, where the growth is stronger. For the highest value $n=8$,
all errors are above $15\%$ on this mesh. In Table \ref{tab:sig} we
vary the coefficient $\sigma$ and see that also here the error growth
for decreasing $\sigma$ is by and large linear for the velocities, as
predicted by theory (Corollary \ref{cor:conv}) and
the RT pressure (Corollary \ref{cor:pres_order}). The BDM pressure on the other hand is very robust
with respect to variations in $\sigma$, but much larger than the
RT-pressure. It starts increasing only for
the smallest value of the parameter, when the pressure errors of the
two approximation spaces are comparable. It follows that for small values
of $\sigma$ the pressure approximation is of similar quality for the
BDM and RT methods.
\begin{table}
\begin{tabular}{|c|c|c|c|}
\hline
$h$ & $\|\bu - \bu_h\|_{L^2(\Omega)}$ & $\|p - p_h\|_{L^2(\Omega)}$ & CPU\\
\hline
$1/10$ &  $0.011\; (-)$ & $0.15\; (-)$ & $0.073s$\\
$1/20$ &  $0.0030\; (1.9)$ & $ 0.074\; (1.0)$ &$0.47s$ \\
$1/40$ &  $0.00087\; (1.8)$ & $ 0.037\; (1.0)$ & $4.7s$\\
$1/80$ &  $0.00031\; (1.5)$ & $0.019\; (1.0)$ & $62.9s$\\
\hline
\end{tabular}
\vspace{5mm}
\caption{Errors for the BDM1/P0 element. $\sigma=100$ $n=1$.}
\label{tab:BDM}
\end{table}
\begin{table}
\begin{tabular}{|c|c|c|c|}
\hline
$h$ & $\|\bu - \bu_h\|_{L^2(\Omega)}$ & $\|p - p_h\|_{L^2(\Omega)}$ & CPU \\
\hline
$1/10$ &  $0.011\; (-)$ & $0.026\; (-)$ & $0.17s$\\
$1/20$ &  $0.0030\; (1.9)$ & $ 0.0060\; (2.1)$  & $1.2s$\\
$1/40$ &  $0.00087\; (1.8)$ & $ 0.0018\; (1.7)$ & $12s$ \\
$1/80$ &  $0.00031\; (1.5)$ & $0.00073\; (1.3)$ & $165s$ \\
\hline
\end{tabular}
\vspace{5mm}
\caption{Errors for the RT1/P1dc element. $\sigma=100$ $n=1$.}
\label{tab:RT}
\end{table}
\begin{table}
\begin{tabular}{|c|c|c|c|c|}
\hline
$n$ & BDM $\|\bu - \bu_h\|_{L^2(\Omega)}$ & BDM $\|p -
                                           p_h\|_{L^2(\Omega)}$ & RT
                                                                  $\|\bu
                                                                  -
                                                                  \bu_h\|_{L^2(\Omega)}$
  &  RT $\|p - p_h\|_{L^2(\Omega)}$\\
\hline
$1$ &   $0.00087$ & $ 0.037$&   $0.00087$ & $ 0.0018$ \\
$2$ &  $0.0048$ & $0.074$ &  $0.0048$ & $0.0058$ \\
$4$ &  $0.031$ & $0.14$ &  $0.031$ & $0.026$ \\
$8$ &  $0.21$ & $0.34$ & $0.21$ & $0.18$\\
\hline
\end{tabular}
\vspace{5mm}
\caption{Errors for the BDM1/P0 element (columns 2 and 3) and RT1/P1dc
  element (columns 4 and 5), $h=1/40$, $\sigma=100$,
  varying $n$.}\label{tab:n}
\end{table}
\begin{table}
\begin{tabular}{|c|c|c|c|c|}
\hline
$\sigma$ & BDM $\|\bu - \bu_h\|_{L^2(\Omega)}$ & BDM $\|p -
                                           p_h\|_{L^2(\Omega)}$ & RT
                                                                  $\|\bu
                                                                  -
                                                                  \bu_h\|_{L^2(\Omega)}$
  &  RT $\|p - p_h\|_{L^2(\Omega)}$\\
\hline
$10^6$&   $0.00061$ & $ 0.037$&   $0.00061$ & $ 0.015$ \\
$100$ &   $0.00087$ & $ 0.037$&   $0.00087$ & $ 0.0018$ \\
$50$ &  $0.0012$ & $0.037$ &  $0.0012$ & $0.0019$ \\
$25$ &  $0.0021$ & $0.037$ &  $0.0021$ & $0.0022$ \\
$10$ &  $0.0051$ & $0.037$ & $0.0051$ & $0.0045$\\
$1$ &  $0.048$ & $0.058$ & $0.048$ & $0.045$\\
\hline
\end{tabular}
\vspace{5mm}
\caption{Errors for the BDM1/P0 element (columns 2 and 3) and RT1/P1dc
  element (columns 4 and 5), $h=1/40$, $n=1$,
  varying $\sigma$.}\label{tab:sig}
\end{table}

\section*{Acknowledgments} The work of Gabriel R. Barrenechea has been funded by the
Leverhulme Trust through the Research Fellowship No. RF-2019-510. Erik
Burman was partially supported by the grant: EP/P01576X/1. Johnny Guzman was partially supported by the grant: NSF, DMS \# 1620100.
\bibliographystyle{abbrv}
\bibliography{references}

\begin{thebibliography}{10}

\bibitem{ABDG98}
C.~Amrouche, C.~Bernardi, M.~Dauge, and V.~Girault.
\newblock Vector potentials in three-dimensional non-smooth domains.
\newblock {\em Math. Methods Appl. Sci.}, 21(9):823--864, 1998.

\bibitem{BBF13}
D.~Boffi, F.~Brezzi, and M.~Fortin.
\newblock {\em Mixed finite element methods and applications}, volume~44 of
  {\em Springer Series in Computational Mathematics}.
\newblock Springer, Heidelberg, 2013.

\bibitem{BDM}
F.~Brezzi, J.~Douglas, Jr., and L.~D. Marini.
\newblock Two families of mixed finite elements for second order elliptic
  problems.
\newblock {\em Numer. Math.}, 47(2):217--235, 1985.

\bibitem{Bu15}
E.~Burman.
\newblock Robust error estimates for stabilized finite element approximations
  of the two dimensional {N}avier-{S}tokes' equations at high {R}eynolds
  number.
\newblock {\em Comput. Methods Appl. Mech. Engrg.}, 288:2--23, 2015.

\bibitem{BF07}
E.~Burman and M.~A. Fern\'{a}ndez.
\newblock Continuous interior penalty finite element method for the
  time-dependent {N}avier-{S}tokes equations: space discretization and
  convergence.
\newblock {\em Numer. Math.}, 107(1):39--77, 2007.

\bibitem{cockburn2008optimal}
B.~Cockburn, B.~Dong, and J.~Guzm{\'a}n.
\newblock Optimal convergence of the original dg method for the
  transport-reaction equation on special meshes.
\newblock {\em SIAM Journal on Numerical Analysis}, 46(3):1250--1265, 2008.

\bibitem{CG04}
B.~Cockburn and J.~Gopalakrishnan.
\newblock A characterization of hybridized mixed methods for second order
  elliptic problems.
\newblock {\em SIAM J. Numer. Anal.}, 42(1):283--301, 2004.

\bibitem{da1986stationary}
H.~B. Da~Veiga.
\newblock On a stationary transport equation.
\newblock {\em Annali dell'Universit{\`a} di Ferrara}, 32(1):79--91, 1986.

\bibitem{EG04}
A.~Ern and J.-L. Guermond.
\newblock {\em Theory and practice of finite elements}, volume 159 of {\em
  Applied Mathematical Sciences}.
\newblock Springer-Verlag, New York, 2004.

\bibitem{fichera1963unified}
G.~Fichera.
\newblock On a unified theory of boundary value problems for elliptic-parabolic
  equations of second order.
\newblock {\em Matematika}, 7(6):99--122, 1963.

\bibitem{girault2012finite}
V.~Girault and P.-A. Raviart.
\newblock {\em Finite element methods for Navier-Stokes equations: theory and
  algorithms}, volume~5.
\newblock Springer Science \& Business Media, 1986.

\bibitem{GRW05}
V.~Girault, B.~Rivi\`ere, and M.~F. Wheeler.
\newblock A discontinuous {G}alerkin method with nonoverlapping domain
  decomposition for the {S}tokes and {N}avier-{S}tokes problems.
\newblock {\em Math. Comp.}, 74(249):53--84, 2005.

\bibitem{girault2010lp}
V.~Girault and L.~Tartar.
\newblock ${L}^p$ and ${W}^{1,p}$ regularity of the solution of a steady
  transport equation.
\newblock {\em Comptes Rendus Math{\'e}matique}, 348(15-16):885--890, 2010.

\bibitem{GSS17}
J.~Guzm\'{a}n, C.-W. Shu, and F.~A. Sequeira.
\newblock {$\rm H(div)$} conforming and {DG} methods for incompressible
  {E}uler's equations.
\newblock {\em IMA J. Numer. Anal.}, 37(4):1733--1771, 2017.

\bibitem{HS90}
P.~Hansbo and A.~Szepessy.
\newblock A velocity-pressure streamline diffusion finite element method for
  the incompressible {N}avier-{S}tokes equations.
\newblock {\em Comput. Methods Appl. Mech. Engrg.}, 84(2):175--192, 1990.

\bibitem{He12}
F.~Hecht.
\newblock New development in {F}ree{F}em++.
\newblock {\em J. Numer. Math.}, 20(3-4):251--265, 2012.

\bibitem{JP86}
C.~Johnson and J.~Pitk\"{a}ranta.
\newblock An analysis of the discontinuous {G}alerkin method for a scalar
  hyperbolic equation.
\newblock {\em Math. Comp.}, 46(173):1--26, 1986.

\bibitem{JS86}
C.~Johnson and J.~Saranen.
\newblock Streamline diffusion methods for the incompressible {E}uler and
  {N}avier-{S}tokes equations.
\newblock {\em Math. Comp.}, 47(175):1--18, 1986.

\bibitem{KS11}
J.~K\"{o}nn\"{o} and R.~Stenberg.
\newblock {$H ({\rm div})$}-conforming finite elements for the {B}rinkman
  problem.
\newblock {\em Math. Models Methods Appl. Sci.}, 21(11):2227--2248, 2011.

\bibitem{NC18}
A.~Natale and C.~J. Cotter.
\newblock A variational {$H({\rm div})$} finite-element discretization approach
  for perfect incompressible fluids.
\newblock {\em IMA J. Numer. Anal.}, 38(3):1388--1419, 2018.

\bibitem{peterson1991note}
T.~E. Peterson.
\newblock A note on the convergence of the discontinuous {G}alerkin method for
  a scalar hyperbolic equation.
\newblock {\em SIAM Journal on Numerical Analysis}, 28(1):133--140, 1991.

\bibitem{RT}
P.-A. Raviart and J.~M. Thomas.
\newblock A mixed finite element method for 2nd order elliptic problems.
\newblock pages 292--315. Lecture Notes in Math., Vol. 606, 1977.

\bibitem{richter1988optimal}
G.~R. Richter.
\newblock An optimal-order error estimate for the discontinuous {G}alerkin
  method.
\newblock {\em Mathematics of Computation}, 50(181):75--88, 1988.

\bibitem{SLLL18}
P.~W. Schroeder, C.~Lehrenfeld, A.~Linke, and G.~Lube.
\newblock Towards computable flows and robust estimates for inf-sup stable
  {FEM} applied to the time-dependent incompressible {N}avier-{S}tokes
  equations.
\newblock {\em SeMA J.}, 75(4):629--653, 2018.

\bibitem{SL18}
P.~W. Schroeder and G.~Lube.
\newblock Divergence-free {$H({\rm div})$}-{FEM} for time-dependent
  incompressible flows with applications to high {R}eynolds number vortex
  dynamics.
\newblock {\em J. Sci. Comput.}, 75(2):830--858, 2018.

\bibitem{sil2017regularity}
S.~Sil.
\newblock Regularity for elliptic systems of differential forms and
  applications.
\newblock {\em Calculus of Variations and Partial Differential Equations},
  56(6):172, 2017.

\end{thebibliography}

\end{document}